\begin{document}
\newtheorem{theorem}{Theorem}
\newtheorem{proposition}[theorem]{Proposition}
\newtheorem{conjecture}[theorem]{Conjecture}
\def\theconjecture{\unskip}
\newtheorem{corollary}[theorem]{Corollary}
\newtheorem{lemma}[theorem]{Lemma}
\newtheorem{sublemma}[theorem]{Sublemma}
\newtheorem{observation}[theorem]{Observation}
\theoremstyle{definition}
\newtheorem{definition}{Definition}
\newtheorem{notation}[definition]{Notation}
\newtheorem{remark}[definition]{Remark}
\newtheorem{question}[definition]{Question}
\newtheorem{questions}[definition]{Questions}
\newtheorem{example}[definition]{Example}
\newtheorem{problem}[definition]{Problem}
\newtheorem{exercise}[definition]{Exercise}

\numberwithin{theorem}{section}
\numberwithin{definition}{section}
\numberwithin{equation}{section}

\def\earrow{{\mathbf e}}
\def\rarrow{{\mathbf r}}
\def\uarrow{{\mathbf u}}
\def\varrow{{\mathbf V}}
\def\tpar{T_{\rm par}}
\def\apar{A_{\rm par}}

\def\reals{{\mathbb R}}
\def\torus{{\mathbb T}}
\def\heis{{\mathbb H}}
\def\integers{{\mathbb Z}}
\def\naturals{{\mathbb N}}
\def\complex{{\mathbb C}\/}
\def\distance{\operatorname{distance}\,}
\def\support{\operatorname{support}\,}
\def\dist{\operatorname{dist}\,}
\def\Span{\operatorname{span}\,}
\def\degree{\operatorname{degree}\,}
\def\kernel{\operatorname{kernel}\,}
\def\dim{\operatorname{dim}\,}
\def\codim{\operatorname{codim}}
\def\trace{\operatorname{trace\,}}
\def\Span{\operatorname{span}\,}
\def\dimension{\operatorname{dimension}\,}
\def\codimension{\operatorname{codimension}\,}
\def\nullspace{\scriptk}
\def\kernel{\operatorname{Ker}}
\def\ZZ{ {\mathbb Z} }
\def\p{\partial}
\def\rp{{ ^{-1} }}
\def\Re{\operatorname{Re\,} }
\def\Im{\operatorname{Im\,} }
\def\ov{\overline}
\def\eps{\varepsilon}
\def\lt{L^2}
\def\diver{\operatorname{div}}
\def\curl{\operatorname{curl}}
\def\etta{\eta}
\newcommand{\norm}[1]{ \|  #1 \|}
\def\expect{\mathbb E}
\def\bull{$\bullet$\ }

\def\xone{x_1}
\def\xtwo{x_2}
\def\xq{x_2+x_1^2}
\newcommand{\abr}[1]{ \langle  #1 \rangle}

\newcommand{\Norm}[1]{ \left\|  #1 \right\| }
\newcommand{\set}[1]{ \left\{ #1 \right\} }
\def\one{\mathbf 1}
\def\whole{\mathbf V}
\newcommand{\modulo}[2]{[#1]_{#2}}
\def \essinf{\mathop{\rm essinf}}
\def\scriptf{{\mathcal F}}
\def\scriptg{{\mathcal G}}
\def\scriptm{{\mathcal M}}
\def\scriptb{{\mathcal B}}
\def\scriptc{{\mathcal C}}
\def\scriptt{{\mathcal T}}
\def\scripti{{\mathcal I}}
\def\scripte{{\mathcal E}}
\def\scriptv{{\mathcal V}}
\def\scriptw{{\mathcal W}}
\def\scriptu{{\mathcal U}}
\def\scriptS{{\mathcal S}}
\def\scripta{{\mathcal A}}
\def\scriptr{{\mathcal R}}
\def\scripto{{\mathcal O}}
\def\scripth{{\mathcal H}}
\def\scriptd{{\mathcal D}}
\def\scriptl{{\mathcal L}}
\def\scriptn{{\mathcal N}}
\def\scriptp{{\mathcal P}}
\def\scriptk{{\mathcal K}}
\def\frakv{{\mathfrak V}}
\def\C{\mathbb{C}}
\def\R{\mathbb{R}}
\def\Rn{{\mathbb{R}^n}}
\def\Sn{{{S}^{n-1}}}
\def\M{\mathbb{M}}
\def\N{\mathbb{N}}
\def\Q{{\mathbb{Q}}}
\def\Z{\mathbb{Z}}
\def\F{\mathcal{F}}
\def\L{\mathcal{L}}
\def\S{\mathcal{S}}
\def\supp{\operatorname{supp}}
\def\dist{\operatorname{dist}}
\def\essi{\operatornamewithlimits{ess\,inf}}
\def\esss{\operatornamewithlimits{ess\,sup}}
\author{Mingming Cao}
\address{Mingming Cao \\
         School of Mathematical Sciences \\
         Beijing Normal University \\
         Laboratory of Mathematics and Complex Systems \\
         Ministry of Education \\
         Beijing 100875 \\
         People's Republic of China}
\email{m.cao@mail.bnu.edu.cn}

\author{Qingying Xue}
\address{Qingying Xue\\
        School of Mathematical Sciences\\
        Beijing Normal University \\
        Laboratory of Mathematics and Complex Systems\\
        Ministry of Education\\
        Beijing 100875\\
        People's Republic of China}
\email{qyxue@bnu.edu.cn}

\thanks{The authors were supported partly by NSFC
(No. 11471041), the Fundamental Research Funds for the Central Universities (No. 2014KJJCA10) and NCET-13-0065. \\ \indent Corresponding
author: Qingying Xue\indent Email: qyxue@bnu.edu.cn}

\keywords{Local $Tb$ theorem; Littlewood-Paley $g_{\lambda}^*$-function; $L^p$-testing condition; Probabilistic methods.}

\date{June 15, 2015.}
\title[Littlewood-Paley $g_{\lambda}^{*}$-function]{A non-homogeneous local $Tb$ theorem for Littlewood-Paley $g_{\lambda}^{*}$-function with $L^p$-testing condition}
\maketitle

\begin{abstract}
In this paper, we present a local $Tb$ theorem for the non-homogeneous Littlewood-Paley $g_{\lambda}^{*}$-function with non-convolution type kernels and upper power bound measure $\mu$. We show that, under the assumptions
$\supp b_Q \subset Q$, $|\int_Q b_Q d\mu| \gtrsim \mu(Q)$ and $||b_Q||^p_{L^p(\mu)} \lesssim \mu(Q)$, the norm inequality
$\big\| g_{\lambda}^{*}(f) \big\|_{L^p(\mu)} \lesssim \big\| f \big\|_{L^p(\mu)}$
holds if and only if the following testing condition holds :
$$
\sup_{Q : cubes \ in \ \Rn} \frac{1}{\mu(Q)}\int_Q \bigg(\int_{0}^{\ell(Q)} \int_{\Rn} \Big(\frac{t}{t+|x-y|}\Big)^{m\lambda}|\theta_t(b_Q)(y,t)|^2
\frac{d\mu(y) dt}{t^{m+1}}\bigg)^{p/2} d\mu(x) < \infty.
$$
This is the first time to investigate $g_\lambda^*$-function in the simultaneous presence of three attributes : local, non-homogeneous and $L^p$-testing condition. It is important to note that the testing condition here is $L^p$ type with
$p \in (1,2]$.
\end{abstract}

\section{Introduction}
The first local $Tb$ theorem was due to Christ \cite{C}. It was shown that if there exists functions $b_Q^1$ and $b_Q^2$ satisfying $||b_Q^1||_{L^\infty} \leq C$, $||b_Q^2||_{L^\infty} \leq C$, $||T b_Q^1||_{L^\infty} \leq C$ and $||T^* b_Q^2||_{L^\infty} \leq C$, then Calder\'{o}n-Zygmund operator $T$ is bounded on $L^2(X,\mu)$, where $X$ is a homogeneous space and $\mu$ is a doubling measure.
Later, Nazarov, Treil and Volberg \cite{NTV2002} proved that such accretive system does exist. It is not only in the nonhomogeneous situation, but also allows the operator to map the functions of the accretive system into $BMO$ space instead of $L^\infty$. It was the first time to discuss the non-homogeneous analysis in local situation. After that, the $L^2$ type testing conditions were introduced by Auscher, Hofmann, Muscalu, Tao and Thiele \cite{AHMTT}. The assumptions are of the form $\int_Q |b_Q^1|^2 \leq |Q|$, $\int_Q |b_Q|^2 \leq |Q|$, $\int_Q |T b_Q^1|^2 \leq |Q|$ and $\int_Q |T^* b_Q|^2 \leq |Q|$. But it was only proved for Lebesgue measure and perfect dyadic Calder\'{o}n-Zygmund operator. Hofmann \cite{Hofmann} extended it to general Calder\'{o}n-Zygmund operators. However, he needed a stronger conditions $\int_Q |b_Q^1|^s \leq |Q|$, $\int_Q |b_Q|^s \leq |Q|$, $\int_Q |T b_Q^1|^2 \leq |Q|$ and $\int_Q |T^* b_Q|^2 \leq |Q|$ for some $s>2$, which leaves much to be desired.

Recently, a local $Tb$ theorem for the general Calder\'{o}n-Zygmund operator in non-homogeneous space was presented by Hyt\"{o}nen and Martikainen \cite{HM}. The upper doubling measure and accretive $L^\infty$ systems were assumed. The authors extended and modified the general non-homogeneous technique of Nazarov, Treil and Volberg \cite{NTV2002} to their case. After that, a local $Tb$ theorem with non-scale-invariant $L^2$-testing conditions was proved by Lacey and Martikainen \cite{LM-CZ}. The combination of non-scale-invariance and general measures is a subtle issue. To handle these obstacles, they made full use of the techniques of non-homogeneous and two-weight dyadic analysis. In the proof, some complicated paraproducts were generated. This problem leaded to obtain critical twisted martingale difference inequalities.

In the square function setting, Martikainen, Mourgoglou and Orponen \cite{MMO} first obtained a non-homogeneous local $Tb$ theorem only with scale invariant $L^\infty$ testing conditions, which corresponds with the most general assumptions in \cite{NTV2002}. Moreover, the authors used the averaging identity over good Whitney regions, which makes the proof quite brief. Later, in the non-homogeneous world local $Tb$ theorems with $L^p$ type testing conditions , which are the non-scale-invariant case, were also established. The
$L^2$-testing conditions for the Littlewood-Paley $g$-function were given by Lacey and Martikainen \cite{LM-S}. The $L^p(1<p<2)$ type testing conditions for $g$-function were provided in \cite{MM} by Martikainen and Mourgoglou. The combination of the general testing functions with upper doubling measures leads some problems to be arisen. To overcome them, the twisted martingale difference operators associated with stopping cubes were introduced. In addition, as for the difficult part $p \in (1,2)$, there is more work to do. It includes an essential $p$-Carleson estimates, $T1$ theorem, the new construction of stopping cubes and square function estimates of martingale difference. Some techniques were learnt from \cite{H2014} and \cite{LM-CZ}.

Still more recently, Cao, Li and Xue \cite{CLX} gave a characterization of two weight norm inequalities for the classical Littlewood-Paley $g_\lambda^*$-function of higher dimension, which was first introduced and studied by Stein \cite{Stein1961} in 1961. The key to the proof was based on a new averaging identity over good Whitney regions. The identity is a further development of Hyt\"{o}nen's improvement \cite{H2012} of the Nazarov-Treil-Volberg method of random dyadic systems \cite{NTV2003}. In addition, the martingale difference decomposition was used. The core of the proof is the construction of stopping cubes with respect to a fractional version of Pivotal condition, which is a modern and effective technique to deal with two-weighted problems.

In this article, we continue the research of Littlewood-Paley $g_\lambda^*$-function but in the non-homogeneous situation. We are concerned with the local $Tb$ theorem with non-scale-invariant $L^p$-testing conditions for $1 < p \le 2$. Before we state the main theorem, it is necessary to give some definitions and notations. The generalized Littlewood-Paley $g_\lambda^*$-function is defined by
\begin{align*}
g_{\lambda}^*(f)(x)
= \bigg(\iint_{\R^{n+1}_{+}} \Big(\frac{t}{t + |x - y|}\Big)^{m \lambda} |\theta_t f(y)|^2
\frac{d\mu(y) dt}{t^{m+1}}\bigg)^{1/2},\ \lambda > 1,
\end{align*}
where $\theta_t f(y) = \int_{\Rn} s_t(y,z) f(z) d\mu(z)$, and the non-convolution type kernel $s_t$ satisfies the following Standard conditions:\\
\noindent\textbf{Standard conditions.} The kernel $s_t : \R^{n} \times \R^{n} \rightarrow \C$ is assumed to satisfy the following estimates: for some
$\alpha>0$
\begin{enumerate}
\item [(1)] Size condition :
$$ |s_t(x,y)| \lesssim \frac{t^{\alpha}}{(t + |x - y|)^{m+\alpha}}.$$
\item [(2)] H\"{o}lder condition :
$$ \quad\quad  \quad  \quad \ |s_t(x,y) - s_t(x,y') \lesssim \frac{|y - y'|^{\alpha}}{(t + |x - y|)^{m+\alpha}} ,$$
whenever $|y -y'| < t/2$ .
\end{enumerate}
Moreover, the non-homogeneous measure $\mu$ is a Borel measure on $\Rn$ satisfying the $upper \ power \ bound$: for some $m>0$,
$$ \mu(B(x,r)) \lesssim r^m, \ \ x \in \Rn, \ r>0.$$
In this paper, we will use $\ell^\infty$ metrics on $\Rn$. (Actually, our result is still true for the usual Euclidian distance metrics on $\Rn$ if we slightly modify the Standard conditions.) Here, we formulate the main result of this paper as follows:
\begin{theorem}\label{Theorem}
Let $1 < p \leq 2$, $\lambda > 2$, $ 0 < \alpha \leq m(\lambda - 2)/2 $. Assume that the kernel $s_t$ satisfies the Standard conditions, and for any cube
$Q \subset \Rn$ there exists a function $b_Q$ satisfying that
\begin{enumerate}
\item [(1)] $supp \ b_Q \ \subset Q$;
\item [(2)] $|\int_{Q} b_Q d\mu| \gtrsim \mu(Q)$;
\item [(3)] $||b_Q||_{L^p(\mu)}^p \lesssim \mu(Q)$;
\item [(4)] $$\mathcal{G}:=\sup_{\substack{Q \subset \Rn \\ Q: cube}}\bigg[\frac{1}{\mu(Q)}\int_Q \bigg(\int_{0}^{\ell(Q)} \int_\Rn
    \Big(\frac{t}{t+|x-y|}\Big)^{m\lambda} |\theta_t b_Q(y)|^2 \frac{d\mu(y) dt}{t^{m+1}} \bigg)^{\frac p2} d\mu(x)\bigg]^{\frac1p} < \infty.$$
\end{enumerate}
Then we have
$$||g_{\lambda}^*(f)||_{L^p(\mu)} \lesssim (1+\mathcal{G})||f||_{L^p(\mu)} .$$
\end{theorem}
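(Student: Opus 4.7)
The plan is to adapt the non-homogeneous local $Tb$ machinery developed by Hyt\"onen--Martikainen \cite{HM}, Lacey--Martikainen \cite{LM-S}, and Martikainen--Mourgoglou \cite{MM} for the $g$-function, combined with the Whitney-averaging device for $g_\lambda^*$ introduced in \cite{CLX}. The first step is to split the integration region defining $g_\lambda^*$ into a local cone $\{|x-y|<t\}$ and a global part $\{|x-y|\ge t\}$. On the global part, the pointwise decay $(t/(t+|x-y|))^{m\lambda}$ together with the size estimate on $s_t$ and the hypothesis $\alpha\le m(\lambda-2)/2$ lets one sum dyadic annuli against the upper power bound of $\mu$, and thereby dominate the global contribution by a finite sum of shifted standard Littlewood--Paley $g$-functions (with modified kernels that still satisfy the Standard conditions). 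The testing quantity $\mathcal{G}$ then controls the analogous testing quantity for these shifted $g$-functions, and the $L^p$-testing $g$-function theorem of \cite{MM} finishes this piece. The remaining work is on the local cone.

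Second, for the local cone one reduces to estimating a bilinear form $\langle g_\lambda^{*,\mathrm{loc}}(f),g\rangle$ by duality and imports two independent random dyadic grids in the sense of Nazarov--Treil--Volberg \cite{NTV2003} and Hyt\"onen \cite{H2012}. After the standard good/bad splitting one may replace $f$ and $g$ by their good parts, paying only an arbitrarily small multiplicative loss. The key structural input is the averaging identity over good Whitney regions from \cite{CLX}, which rewrites the Whitney-region integral as a sum, over good cubes $Q$, of inner products of $\theta_t(\Delta_Q f)$ with $\Delta_R g$ restricted to $\ell(R)\le\ell(Q)$. This is the identity that lets the $g_\lambda^*$-problem be treated by essentially $g$-function methods.

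Third, with this bilinear reduction in hand, I would construct a stopping time tree $\mathcal{F}$ adapted to each test function $b_Q$ exactly in the spirit of \cite{MM}: a child $Q'\in\operatorname{ch}_{\mathcal F}(Q)$ is declared whenever $|m_{Q'}b_Q|$ falls below a fixed threshold (accretivity failure), or $\int_{Q'}|b_Q|^p\,d\mu$ exceeds a large multiple of $\mu(Q')$, or the $p$-testing average on $Q'$ exceeds a large multiple of $\mathcal{G}^p$. A standard stopping argument gives $\sum_{Q'\in\operatorname{ch}_{\mathcal F}(Q)}\mu(Q')\le \tau\mu(Q)$ with $\tau<1$, hence a Carleson packing condition on stopping generations. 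I then decompose the good part of $f$ by twisted martingale differences $\Delta_Q^b f$ tied to this stopping tree, so that on each stopping interval $b_Q$ may be treated as a nonvanishing constant and the nonaccretive part is peeled off as a paraproduct.

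Finally, the matrix elements $\langle\theta_t\Delta_Q^b f,\Delta_R g\rangle$ are analyzed by the usual case split on the relative position of $Q$ and $R$. Separated pairs and pairs with a large scale gap are handled by the H\"older and size kernel estimates together with the goodness of the cubes, and summed by a Cauchy--Schwarz together with the $\mu$-power bound. The diagonal pairs and the paraproduct pieces are reduced, via the twisted martingale difference square-function inequalities from \cite{LM-CZ,LM-S,MM,H2014}, to a combination of $\|b_Q\|_{L^p(\mu)}^p\lesssim\mu(Q)$ and the testing constant $\mathcal{G}$. I expect the hard step to be precisely this paraproduct/diagonal piece in the range $p\in(1,2)$: the $L^p$-rather than $L^2$-normalization of both $b_Q$ and $\mathcal{G}$ forces the use of a sharp $p$-Carleson embedding, and one must intertwine this with the non-doubling structure of $\mu$, following the scheme that generated the ``critical twisted martingale difference inequalities'' of \cite{LM-CZ}. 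Once that $p$-Carleson inequality is installed, a self-improvement/absorption argument on the small parameter coming from the good-cube reduction closes the estimate and yields $\|g_\lambda^* f\|_{L^p(\mu)}\lesssim(1+\mathcal{G})\|f\|_{L^p(\mu)}$.
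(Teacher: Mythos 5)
Your proposal has a genuine structural gap: the bilinear--duality step in your second paragraph does not survive the passage from $p=2$ to $p\in(1,2)$. You propose to ``reduce to estimating a bilinear form $\langle g_\lambda^{*,\mathrm{loc}}(f),g\rangle$ by duality'' and then invoke the averaging identity over good Whitney regions from \cite{CLX}. But $g_\lambda^*$ is a square function, so the natural dualization is $\langle (g_\lambda^{*}(f))^2, g\rangle$ via $L^{p/2}$--$L^{(p/2)'}$ duality, and $p/2<1$ exactly when $p<2$; there is no duality to exploit, and the Whitney-averaging identity is precisely the device the paper explicitly discards for this range (the introduction states that ``the testing condition here is $L^p$ type with $p\in(1,2)$, which means that the averaging identity over good Whitney regions we used in the previous articles are not suitable for our setting now''). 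The paper works instead directly with the $L^p$ quantity $\bigl\|(\sum_R \mathbf 1_R \int_{W_R}\cdots)^{1/2}\bigr\|_{L^p(\mu)}$, inserting the twisted martingale decomposition of $f$ inside $\theta_t$, applying Minkowski, reducing the sum over $R$ to conditional expectations $\mathbb E_{\tau_j(k)}(|\Delta_{k-i}f|)$, and then invoking Stein's vector-valued conditional-expectation inequality (Lemma~\ref{Stein}) together with the twisted square-function estimate (Lemma~\ref{Delta-k}) and a $p$-Carleson embedding. That chain is what replaces duality, and nothing in your plan supplies it.

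Two further issues. First, your opening split into the local cone $\{|x-y|<t\}$ and the global region $\{|x-y|\ge t\}$, with the global piece dominated by a ``finite sum of shifted standard $g$-functions,'' is not in the paper and is unjustified as stated: the testing constant $\mathcal G$ is for the full weighted kernel, and it is not clear that the testing hypotheses transfer to the (infinitely many, geometrically decaying) shifted $g$-functions; the paper instead keeps the weight $(t/(t+|x-y|))^{m\lambda}$ intact and exploits $\alpha\le m(\lambda-2)/2$ inside the single-cube estimates of Lemmas~\ref{estimate-1}, \ref{estimate-2}, \ref{nested-1}, \ref{nested-2}. Second, you omit the paper's crucial bootstrap architecture: one first proves an a priori bound, then a $T1$ inequality $\|g_\lambda^*\|\le C_2(1+\mathcal G_{\mathrm{loc}}(3))$, then a Carleson estimate $\mathcal G_{\mathrm{loc}}(3)\le C_3(1+\mathcal G)+C_2^{-1}\|g_\lambda^*\|/2$, and finally absorbs. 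Your ``self-improvement/absorption'' sentence gestures at this but does not identify the two inequalities, and without them the small good/bad parameter has nothing to absorb into. The stopping-tree construction, the twisted differences, and the four-way case split on the relative position of $Q$ and $R$ are all in the right spirit, but the core analytic mechanism your proposal rests on is the one the paper was written to avoid.
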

\begin{remark}If we take simply $d\mu(x)=dx$ and $s_t(y,z)=p_t(y-z)$, where $p$ is the classical Poisson kernel, then the above operator coincides with the $g_{\lambda}^*$-function defined and studied by Stein \cite{Stein1961} in 1961 and later studied by Fefferman \cite{Fe} in 1970, Muckenhoupt and Wheeden \cite{MR} in 1974, etc. It should be noted that even for this classical case, the results in this paper are also completely new.
\end{remark}
To better understand a pseudo-accretive system satisfying the above conditions $(1)$, $(2)$ and $(3)$, we give some examples.

\vspace{0.3cm}
{\textbf{Examples.}}
Let $Q \subset \Rn$ be a cube, and $c_Q$ be the center of $Q$. When $\mu$ is the Lebesgue measure on $\Rn$, the following functions $b_Q$ satisfy the
conditions $(1)$, $(2)$ and $(3)$ in Theorem $\ref{Theorem}$.
\begin{enumerate}
\item [$\bullet$]{Characteristic  function}: $b_Q(x) =\mathbf{1}_{Q}(x)$. \\
\item [$\bullet$]\text{Gaussian  function}:  $b_Q(x) = \pi^{-\frac{n}{2}} \exp \big( -\frac{|x-c_Q|^2}{\ell(Q)^2}\big)\mathbf{1}_{Q}(x)$. \\
\item [$\bullet$]\text{Poisson kernel}:    $ b_Q(x) = \frac{\ell(Q)^{n+1}}{(\ell(Q)^2 + |x-c_Q|^2)^{\frac{n+1}{2}}} \mathbf{1}_{Q}(x)$. \\
\item [$\bullet$]\text{Polish function}:    $b_Q(x) =\exp \big(-\frac{\ell(Q)^2}{\ell(Q)^2-4|x-c_Q|^2}\big)\mathbf{1}_{Q}(x)$. \\
\item [$\bullet$]\text{$C_c^{\infty}$ function}:   $ b_Q(x) =\Psi_Q(|x-c_Q|^2)$,
where $\Psi_Q(t) = \int_{t}^{+\infty} \psi_Q(s) ds \Big/ \int_{-\infty}^{+\infty} \psi_Q(s) ds,$
and
$\psi_Q(t) = \exp \big( - \frac{1}{(t-a_Q^2)(d_Q^2-t)}\big) \mathbf{1}_{(a_Q,d_Q)}(t), \ \ a_Q=\ell(Q)/4, \ d_Q=\ell(Q)/2.$
It can be checked that $b_Q \in C_c^{\infty}(\Rn)$, $0 \leq b_Q \leq 1$, $\supp b_Q \subset Q$ and
\begin{equation*}
b_Q(x)=
\begin{cases}
 1 \ \ & x \in \frac{1}{2}Q . \\
 0 \ \ & x \notin Q.
\end{cases}
\end{equation*}
\end{enumerate}

Next let us discuss the general strategy and some new aspects of the proof of the main Theorem. It should be noted that some basic ideas are taken from \cite{CLX} and \cite{MM}. One of the most important steps is the reductions of our main theorem. It is first reduced to a priori assumption, which is always convenient to our analysis. Then we are reduced to proving $T1$ theorem $(\ref{key-1})$ and Carleson estimate $(\ref{key-2})$. Carleson estimate $(\ref{key-2})$ is the core of this paper, since the proof of $T1$ theorem $(\ref{key-1})$ except for a paraproduct estimate is largely contained in Carleson estimate $(\ref{key-2})$. Furthermore, the significant reason why we need Carleson estimate is that the testing condition here is $L^p$ type with $p \in (1,2)$, which means that the averaging identity over good Whitney regions we used in the previous articles are not suitable for our setting now. Beyond that, stopping cubes are constructed by means of testing function $b_Q$. Finally, the main technique of our proof is the using of twisted (adapted) martingale differences operators. Though they are more complicated than the classical case, they still have many good properties, for example, they have orthogonality and they are not only supported on dyadic cubes but also are constant on each of their children. The twisted martingale difference operators have been used and discussed by many authors in their proofs of different types of $Tb$ theorems, such as Nazarov, Treil and Volberg \cite{NTV2003}, Hyt\"{o}nen, Martikainen \cite{HM-1}, Lacey, Martikainen \cite{LM-CZ} and \cite{LM-S}.

\section{Some  Preliminaries}\label{Sec-Reduction}
In this section, our goal is to introduce some fundamental tools including the random dyadic grids and the twisted martingale difference operators. Additionally, several key lemmas are needed below.
\subsection{Random dyadic grids}
Here the random dyadic grids, which essentially go back to \cite{NTV2003}, are improved by Hyt\"{o}nen \cite{H2012,
 H2014}.
Let $\mathcal{D}^0$ denote the standard dyadic grid. That is,
$$ \mathcal{D}^0 := \bigcup_{k \in \Z}\mathcal{D}_k^0, \ \ \mathcal{D}_k^0 := \big\{2^{k}([0, 1)^N + m); k \in \Z, \ m \in \Z^n \big\}.$$
For a binary sequence $\beta = \{ \beta_j\}_{j \in \Z}$, where $\beta_j \in \{0,1\}^n$, we define
$$ \mathcal{D}_k^\beta := \Big\{I + \sum_{j:j<k} 2^{j} \beta_j; I \in \mathcal{D}_k^0 \Big\}.$$
Then we will get the general dyadic systems of the form
$$
\mathcal{D}^\beta := \bigcup_{k \in \Z}\mathcal{D}_k^\beta.
$$
To facilitate, we write $$ I + \beta := I + \sum_{j:j<k} 2^{j} \beta_j.$$
According to the canonical product probability measure $\mathbb{P}_\beta$ on $(\{0, 1\}^n)^\Z$ which makes the coordinates $\beta_j$ independent and identically distributed with $\mathbb{P}_\beta(\beta_j = \eta) = 2^{-n}$ for all $\eta \in \{0,1\}^n$.

A cube $I \in \mathcal{D}$ is said to be $bad$ if there exists a $J \in \mathcal{D}$ with $\ell(J) \geq 2^r \ell(I)$ such that
$\dist(I,\partial J) \leq \ell(I)^{\gamma} \ell(J)^{1-\gamma}$. Otherwise, $I$ is called $good$.
Here $r \in \Z_+$ and $\gamma \in (0,\frac12)$ are given parameters. Denote
$\pi_{good} = \mathbb{P}_{\beta}(I + \beta \ \text{is \ good}) = \mathbb{E}_{\beta}(\mathbf{1}_{good}(I+\beta))$.
Then $\pi_{good}$ is independent of $I \in \mathcal{D}_0$. And the parameter $r$ is a fixed constant so that $\pi_{good} > 0$.
Moreover, in this article, we fix the constant $\gamma$ to be so small that
$$\gamma \leq \frac{\alpha}{2(m+\alpha)} \ \ \text{and} \ \ \frac{m \gamma}{1-\gamma}\leq \frac{\alpha}{4},$$
where $\alpha > 0$ appears in the kernel estimates. And the parameter $r$ will be demanded to be sufficiently large as below.
It is important to observe that the position and goodness of a cube $I \in \mathcal{D}^0$ are independent (see \cite{H2012}).
\subsection{Stopping cubes.}
We shall present the construction of stopping cubes. For a small convenience we may assume the
normalisation $\langle b_Q \rangle_Q = 1$. Let $\mathcal{D}$ be a dyadic grid in $\Rn$ and let $Q^* \in \mathcal{D}$
be a fixed dyadic cube with $\ell(Q^*) = 2^s$. Set $\mathcal{F}^0_{Q^*} = \{ Q^* \}$ and let $\mathcal{F}^1_{Q^*}$ consist of
the maximal cubes $Q \in \mathcal{D}$, $Q \subset Q^*$, for which at least one of the following two conditions holds:
\begin{enumerate}
\item [(1)] $|\langle  b_{Q^*} \rangle_Q| < 1/2$;
\item [(2)] $\langle  |b_{Q^*}|^p \rangle_Q > 2^{p'+1} A^{p'}$.
\end{enumerate}
Here $A$ is a constant such that $||b_R||_{L^p(\mu)}^p \leq A \mu(R)$ for every cube $R \subset \Rn$. Next, we repeat the previous procedure by replacing $Q^*$ with a fixed $Q \in \mathcal{F}^1_{Q^*}$. The combined collection of stopping cubes resulting from this is called $\mathcal{F}^2_{Q^*}$. This is continued and
we set $\mathcal{F}_{Q^*} =\bigcup_{j=0}^\infty \mathcal{F}^j_{Q^*}$. Finally, for every $Q \in \mathcal{D}$, $Q \subset Q^*$, we let $Q^a \in \mathcal{F}_{Q^*}$ be the minimal cube $R \in \mathcal{F}_{Q^*}$ for which $Q \subset R$.

The notion of stopping cubes and the following basic facts can be found in \cite{MM}.
\begin{lemma}\label{F-Fj-Q*}
If $F \in \mathcal{F}^j_{Q^*}$ for some $j \geq 0$, then there holds that
$$\sum_{\substack{S \in \mathcal{F}^{j+1}_{Q^*} \\ S \subset F}} \mu(S) \lesssim \mu(F).$$
\end{lemma}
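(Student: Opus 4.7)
The plan is to exploit the pairwise disjointness of the stopping children combined with the defining inequalities (1) and (2). First, note that every $S \in \mathcal{F}^{j+1}_{Q^*}$ with $S \subset F$ is a maximal dyadic subcube of $F$ satisfying at least one of the stopping conditions, so the family $\mathcal{C} := \{S \in \mathcal{F}^{j+1}_{Q^*} : S \subset F\}$ is pairwise disjoint inside $F$; this already yields $\sum_{S \in \mathcal{C}} \mu(S) \leq \mu(F)$, which is the literal content of the bound. The substance of the lemma, however, is the \emph{quantitative} packing $\sum_S \mu(S) \leq (1-\tau)\mu(F)$ for some $\tau > 0$ depending only on $p$ and $A$, which is what the applications really use, and that is what I would establish.

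I would decompose $\mathcal{C} = \mathcal{C}_1 \sqcup \mathcal{C}_2$, where $\mathcal{C}_2$ consists of those $S$ triggering condition (2) and $\mathcal{C}_1 := \mathcal{C}\setminus\mathcal{C}_2$ consists of those triggering only condition (1). The contribution of $\mathcal{C}_2$ is handled immediately by the $L^p$-testing bound: since $\mu(S) < 2^{-(p'+1)}A^{-p'}\int_S |b_F|^p\, d\mu$ for $S \in \mathcal{C}_2$, summing disjoint pieces and invoking $\|b_F\|_{L^p(\mu)}^p \leq A\mu(F)$ gives
$$\sum_{S \in \mathcal{C}_2}\mu(S) \;\leq\; 2^{-(p'+1)}A^{-p'}\int_F |b_F|^p\, d\mu \;\leq\; 2^{-(p'+1)}A^{1-p'}\mu(F),$$
which, since $A \geq 1$ and $p' \geq 2$, is at most $\mu(F)/8$.

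For $\mathcal{C}_1$ I would use the testing-function normalization $\langle b_F\rangle_F = 1$. Setting $U := \bigcup_{S \in \mathcal{C}_1}S$, split
$$\mu(F) \;=\; \int_F b_F\, d\mu \;=\; \int_{F\setminus U} b_F\, d\mu + \sum_{S \in \mathcal{C}_1}\langle b_F\rangle_S\, \mu(S).$$
The second sum is bounded in absolute value by $\mu(U)/2$ via the defining inequality $|\langle b_F\rangle_S| < 1/2$. The first integral is estimated by Hölder, $|\int_{F\setminus U}b_F\, d\mu| \leq \|b_F\|_{L^p(\mu)}\,\mu(F\setminus U)^{1/p'}$, and then controlled through the $L^p$-testing hypothesis. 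Rearranging, one obtains a constraint forcing $\mu(U)$ to be bounded strictly below $\mu(F)$, and together with the $\mathcal{C}_2$ estimate this produces the packing constant $1-\tau < 1$.

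The main delicate point is the Hölder step: since $\|b_F\|_{L^p(\mu)}^p$ is only known to be $\lesssim \mu(F)$, a crude application of Hölder recovers only the trivial bound $\mu(U) \leq \mu(F)$. To extract a quantitative improvement one has to exploit the smallness of $\mu(\bigcup \mathcal{C}_2)$ established above to sharpen the domain of integration, together with the finiteness of $p'$ (here $p > 1$ is used in an essential way); this balancing is the single non-routine ingredient.
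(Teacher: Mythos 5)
The paper does not actually prove this lemma: it is stated and then deferred to \cite{MM} (``The notion of stopping cubes and the following basic facts can be found in \cite{MM}''), so there is no in-paper argument to compare against. Your approach, however, is the standard stopping-time packing argument, and it is essentially what \cite{MM} does. You are also right that the literal statement with $\lesssim$ is trivial from disjointness and that what Lemma \ref{Carleson-sequence} actually needs is a packing constant strictly less than $1$, since the Carleson bound there comes from summing a geometric series over generations.

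That said, your closing paragraph is off. You do not need to ``exploit the smallness of $\mu(\bigcup\mathcal{C}_2)$ to sharpen the domain of integration''; the rearrangement you already described closes the argument directly and elementarily. Writing $x=\mu(U)/\mu(F)$ and using $\langle b_F\rangle_F=1$, the decomposition plus $|\langle b_F\rangle_S|<1/2$ on $\mathcal{C}_1$ plus H\"older with exponent $p'$ give
\begin{equation*}
\mu(F)-\tfrac12\mu(U)\ \le\ \|b_F\|_{L^p(\mu)}\,\mu(F\setminus U)^{1/p'}\ \le\ \big(A\mu(F)\big)^{1/p}\,\mu(F\setminus U)^{1/p'}.
\end{equation*}
Now simply bound the left side below by $\tfrac12\mu(F)$ (valid since $\mu(U)\le\mu(F)$) and raise both sides to the power $p'$: this yields $\mu(F)\le 2^{p'}A^{p'-1}\mu(F\setminus U)$, i.e.\ $\mu(U)\le\big(1-2^{-p'}A^{1-p'}\big)\mu(F)$. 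The point you worried about is precisely that you must keep $\mu(F\setminus U)^{1/p'}$ rather than crudely replacing it by $\mu(F)^{1/p'}$; once you do, no further device is needed. Your $\mathcal{C}_2$ estimate then enters only additively. In fact (using $p'/p=p'-1$) your bound $\sum_{\mathcal{C}_2}\mu(S)\le 2^{-(p'+1)}A^{1-p'}\mu(F)$ is exactly half of the margin $2^{-p'}A^{1-p'}\mu(F)$ just obtained, so $\sum_{S\in\mathcal{C}}\mu(S)\le\big(1-2^{-p'-1}A^{1-p'}\big)\mu(F)$ with no balancing to worry about. The $p>1$ hypothesis is indeed essential (so that $p'<\infty$), as you note. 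Everything else in your proposal---the disjointness, the $\mathcal{C}_1/\mathcal{C}_2$ split, the use of conditions (1)--(3) and the normalisation---is correct.
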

\begin{lemma}\label{Carleson-sequence}
Write
\begin{equation*}
a_Q =
\begin{cases}
\mu(Q), \ \ & Q \in \bigcup_j \mathcal{F}^j_{Q^*}; \\
0, \ \ & \text{otherwise}.
\end{cases}
\end{equation*}
Then $\{a_Q\}$ is a Carleson sequence. That is, for each dyadic cube $R$, it holds that
$$ \sum_{Q \subset R} a_Q \lesssim \mu(R).$$
\end{lemma}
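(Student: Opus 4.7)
The plan is to reduce to a geometric series across generations of the stopping tree. First I would fix a dyadic cube $R$. Only cubes in $\bigcup_j \mathcal{F}^j_{Q^*}$ contribute to the sum, and all such cubes lie in $Q^*$, so the cases $R \cap Q^* = \emptyset$ and $R \supsetneq Q^*$ are either trivial or reduce to the case $R \subseteq Q^*$ applied with $R = Q^*$. Let $\mathcal{M}(R)$ be the collection of maximal elements of $\bigcup_j \mathcal{F}^j_{Q^*}$ contained in $R$; these are pairwise disjoint with $\sum_{M \in \mathcal{M}(R)} \mu(M) \leq \mu(R)$, and every $Q \subset R$ with $a_Q > 0$ lies in exactly one such $M$. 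Hence it suffices to prove that for each $M \in \mathcal{M}(R)$,
\[
\sum_{\substack{Q \subset M \\ Q \in \bigcup_j \mathcal{F}^j_{Q^*}}} \mu(Q) \lesssim \mu(M).
\]

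The heart of the argument is to upgrade Lemma \ref{F-Fj-Q*} to a quantitative packing inequality with constant strictly less than one: for some $\delta \in (0,1)$ and every $F \in \mathcal{F}^j_{Q^*}$,
\[
\sum_{\substack{S \in \mathcal{F}^{j+1}_{Q^*} \\ S \subset F}} \mu(S) \leq \delta \, \mu(F).
\]
To establish this, split the stopping children of $F$ into Type (1) (those with $|\langle b_F \rangle_S| < 1/2$) and Type (2) (those with $\langle |b_F|^p \rangle_S > 2^{p'+1} A^{p'}$). Chebyshev applied to condition (2) together with $\|b_F\|_{L^p(\mu)}^p \leq A\mu(F)$ yields
\[
\sum_{\text{Type (2)}} \mu(S) \leq \frac{1}{2^{p'+1} A^{p'}} \int_F |b_F|^p \, d\mu \leq \frac{\mu(F)}{2^{p'+1} A^{p'-1}}.
\]
For the Type (1) contribution I would exploit the normalisation $\int_F b_F \, d\mu = \mu(F)$ via the partition $F = E_F \sqcup \bigsqcup_S S$ with $E_F := F \setminus \bigcup_S S$: estimating $|\int_{E_F} b_F \, d\mu|$ and $|\sum_{\text{Type (2)}} \int_S b_F \, d\mu|$ by H\"older's inequality (with the Chebyshev bound just obtained) and using $|\langle b_F \rangle_S| < 1/2$ on Type (1) cubes, one arrives at
\[
\mu(F) \leq \mu(E_F)^{1/p'} A^{1/p} \mu(F)^{1/p} + \tfrac{1}{2} \sum_{\text{Type (1)}} \mu(S) + 2^{-1-1/p'} \mu(F).
\]
Since $2^{-1-1/p'} < 1/2$, rearranging forces $\sum_S \mu(S) \leq \delta \mu(F)$ for some explicit $\delta < 1$.

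Once this sharpened packing is in hand, I would iterate: for $M \in \mathcal{F}^{j_M}_{Q^*}$ and every $k \geq 0$,
\[
\sum_{\substack{Q \in \mathcal{F}^{j_M+k}_{Q^*} \\ Q \subset M}} \mu(Q) \leq \delta^k \mu(M).
\]
Summing the geometric series in $k$ gives $\sum_{Q \subset M, \, Q \in \bigcup_j \mathcal{F}^j_{Q^*}} \mu(Q) \leq \mu(M)/(1-\delta)$, and summing over $M \in \mathcal{M}(R)$ with $\sum_M \mu(M) \leq \mu(R)$ completes the proof. The main obstacle is precisely this strict packing bound $\delta < 1$: as stated, Lemma \ref{F-Fj-Q*} only supplies an unspecified $\lesssim$ constant, and the geometric series above cannot be summed unless the H\"older/Chebyshev computation is pushed hard enough to produce $\delta < 1$. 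The thresholds $1/2$ and $2^{p'+1} A^{p'}$ appearing in the stopping conditions are tuned exactly so that this happens.
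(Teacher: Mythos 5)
Your proof is correct, and it is essentially the argument the paper relies on: the paper itself gives no proof of Lemma \ref{Carleson-sequence}, deferring to \cite{MM}, and the standard proof there is exactly your route -- localize to the maximal stopping cubes in $R$, upgrade the packing of stopping children to $\sum_S \mu(S)\le \delta\,\mu(F)$ with $\delta<1$ via Chebyshev on condition (2) and H\"older against $\int_F b_F\,d\mu=\mu(F)$ (the thresholds $1/2$ and $2^{p'+1}A^{p'}$ being tuned so that $2^{-1-1/p'}<1/2$), and then sum the geometric series over generations. You also correctly note that Lemma \ref{F-Fj-Q*} as stated (with an unspecified constant) would not suffice by itself, which is precisely why the sharper $\delta<1$ bound is the key point.
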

\subsection{Twisted martingale difference operators.}
If $Q \in \mathcal{D}$, $Q \subset Q^*$, and $f \in L^1_{loc}(\mu)$, we define the twisted martingale difference operators as follows
$$ \Delta_Q f = \sum_{Q' \in ch(Q)} \Big[ \frac{\langle f \rangle_{Q'}}{\langle b_{(Q')^a} \rangle_{Q'}}b_{(Q')^a} -
\frac{\langle f \rangle_{Q}}{\langle b_{Q^a} \rangle_{Q}}b_{Q^a} \Big] \mathbf{1}_{Q'}.$$
Moreover, on the largest $Q^*$ level we consider $\Delta_{Q^*}$ as $\Delta_{Q^*} + \langle f \rangle_{Q^*} b_{Q^*}$. Thus, it holds that $\int \Delta_Q f d\mu
=0$ if $Q \subsetneq Q^*$. We also define
$$ \Delta_k f = \sum_{Q \in \mathcal{D}_k:Q \subset Q^*}\Delta_Q f.$$
The next vector-valued inequality was proved by Stein \cite[p.~103]{Stein1970}.
\begin{lemma}\label{Stein}
Let $(\mathcal{M}, \nu)$ be a $\sigma$-finite measure space and let $\mathfrak{M}$ denote the family of measurable subsets of $\mathcal{M}$. Suppose that $\{\mathcal{F}_k\}_k$is an infinite increasing sequence of $(\sigma-finite)$ $\sigma$-subalgebras of  $\mathfrak{M}$. Let $E_k = E(\cdot|\mathcal{F}_k)$ denote the conditional expectation operator with respect to $\mathcal{F}_k$. Assume that $\{f_k\}_k$ is any sequence of functions on $(\mathcal{M}, \nu)$, where $f_k$ is not assumed to be $\mathcal{F}_k$-measurable, and let $\{n_k\}_k$ be any sequence of positive integers. Then there holds that
$$
\Big\| \Big(\sum_{k \geq 1}|E_{n_k}f_k|^2\Big)^{1/2} \Big\|_{L^p(\nu)} \lesssim \Big\| \Big(\sum_{k \geq 1}|f_k|^2\Big)^{1/2} \Big\|_{L^p(\nu)}, \ 1 < p < \infty,
$$
where the implied constant defends only on $p$.
\end{lemma}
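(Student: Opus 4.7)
The plan is to establish the inequality in three steps corresponding to $p=2$, $p>2$, and $1<p<2$, with conditional Jensen's inequality and Doob's maximal inequality serving as the two essential inputs. For $p=2$ the argument is immediate: conditional Jensen gives the pointwise bound $|E_{n_k} f_k|^2 \leq E_{n_k}(|f_k|^2)$, and since $E_{n_k}$ preserves integrals,
\[
\Big\| \Big(\sum_{k}|E_{n_k}f_k|^2\Big)^{1/2} \Big\|_{L^2(\nu)}^2
\leq \sum_k \int E_{n_k}(|f_k|^2)\, d\nu
= \sum_k \int |f_k|^2\, d\nu
= \Big\| \Big(\sum_{k}|f_k|^2\Big)^{1/2} \Big\|_{L^2(\nu)}^2.
\]

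For $p>2$, I would linearize the left-hand side via $L^{p/2}$--$L^{(p/2)'}$ duality. Writing $S = \sum_k |E_{n_k} f_k|^2$, one has $\|S^{1/2}\|_{L^p(\nu)}^2 = \|S\|_{L^{p/2}(\nu)}$, and since $p/2>1$ this equals the supremum of $\int S g\, d\nu$ over nonnegative $g$ with $\|g\|_{L^{(p/2)'}(\nu)} \leq 1$. Applying conditional Jensen termwise and then the self-adjointness identity $\int g\, (E_{n_k} h)\, d\nu = \int (E_{n_k} g)\, h\, d\nu$ transfers the conditional expectation onto the test function:
\[
\int g S\, d\nu
\leq \sum_k \int (E_{n_k} g)\, |f_k|^2\, d\nu
\leq \int (Mg) \sum_k |f_k|^2\, d\nu,
\]
where $Mg := \sup_{j} |E_j g|$ is Doob's maximal operator for the filtration $\{\mathcal{F}_k\}$. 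Since $(p/2)' > 1$, Doob's inequality yields $\|Mg\|_{L^{(p/2)'}(\nu)} \lesssim \|g\|_{L^{(p/2)'}(\nu)}$, and a single application of H\"older followed by the supremum over $g$ produces the desired estimate.

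Finally, the range $1<p<2$ is handled by duality in the vector-valued space $L^p(\nu;\ell^2)$, whose dual is $L^{p'}(\nu;\ell^2)$ with $p'>2$. Pairing the left-hand side against a sequence $\{g_k\}$ normalized by $\|(\sum_k |g_k|^2)^{1/2}\|_{L^{p'}(\nu)} \leq 1$, transferring each $E_{n_k}$ across by self-adjointness, and applying Cauchy--Schwarz pointwise in $k$ yields
\[
\Big| \sum_k \int (E_{n_k} f_k)\, g_k\, d\nu \Big|
= \Big| \sum_k \int f_k\, (E_{n_k} g_k)\, d\nu \Big|
\leq \Big\| \Big(\sum_k |f_k|^2\Big)^{1/2}\Big\|_{L^p(\nu)} \Big\| \Big(\sum_k |E_{n_k} g_k|^2\Big)^{1/2}\Big\|_{L^{p'}(\nu)},
\]
and the estimate already proven at the exponent $p' > 2$ bounds the last factor by a constant. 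The principal obstacle is the $p>2$ step: because the indices $\{n_k\}$ need not be monotone and the functions $f_k$ need not be $\mathcal{F}_{n_k}$-measurable, one must both legitimately move the conditional expectations onto a fixed test $g$ and then dominate every $E_{n_k}(g)$ by a single operator that remains $L^q$-bounded; Doob's maximal inequality for the ambient filtration is precisely what makes the H\"older step close.
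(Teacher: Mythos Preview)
Your argument is correct and is essentially the classical proof: conditional Jensen together with Doob's maximal inequality handles $p\ge 2$, and vector-valued duality in $L^p(\nu;\ell^2)$ reduces $1<p<2$ to the case $p'>2$. Note, however, that the paper does not supply its own proof of this lemma; it simply quotes the result from Stein \cite[p.~103]{Stein1970}, so there is no in-paper argument to compare against. Your proof is in fact the standard one found in that reference.
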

We also need the following martingale estimate, which has been proved in \cite{MM}.

\begin{lemma}\label{Delta-k}
Let $|f| \leq 1$. Then there holds that
$$ \Big\|\Big(\sum_{k}|\Delta_k f|^2\Big)^{1/2}\Big\|_{L^p(\mu)}^p \lesssim \mu(Q^*).$$
\end{lemma}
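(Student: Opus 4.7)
The plan is to split $\Delta_k f = \Delta_k^{\text{in}} f + \Delta_k^{\text{out}} f$ according to whether each child $Q' \in \text{ch}(Q)$ of a cube $Q \in \mathcal{D}_k$ stays inside the same stopping tree as $Q$ (i.e.\ $(Q')^a = Q^a$) or is itself a newly-selected stopping cube ($Q' \in \mathcal{F}_{Q^*}$). With $F = Q^a$ and $a_R := \langle f\rangle_R/\langle b_{R^a}\rangle_R$, the definition unfolds as
\begin{align*}
\Delta_Q^{\text{in}} f|_{Q'} &= b_F\,(a_{Q'} - a_Q)\,\mathbf{1}_{Q'}\ \text{ when } (Q')^a = F,\\
\Delta_Q^{\text{out}} f|_{Q'} &= \big(\langle f\rangle_{Q'}\,b_{Q'} - a_Q\,b_F\big)\,\mathbf{1}_{Q'}\ \text{ when } Q' \in \mathcal{F}_{Q^*},
\end{align*}
using the normalization $\langle b_{Q'}\rangle_{Q'} = 1$. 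The stopping rule forces $|\langle b_F\rangle_R| \geq 1/2$ for every $R \in \mathcal{T}_F$, so $|a_R| \leq 2$.

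For the out part, the pointwise bound
$$
\sum_k|\Delta_k^{\text{out}} f|^2 \;\lesssim\; \sum_{G\in\mathcal{F}_{Q^*}\setminus\{Q^*\}}\big(|b_G|^2 + |b_{G^\pi}|^2\big)\mathbf{1}_G,
$$
where $G^\pi$ is the stopping parent of $G$, combined with the sub-additivity $(\sum x_i)^{p/2}\leq \sum x_i^{p/2}$ (valid since $p/2 \leq 1$), condition (3) of the theorem, and the Carleson property of Lemma \ref{Carleson-sequence}, yields
$$
\Big\|\Big(\sum_k|\Delta_k^{\text{out}} f|^2\Big)^{1/2}\Big\|_{L^p(\mu)}^p \;\lesssim\; \sum_{Q\in\mathcal{F}_{Q^*}}\mu(Q) \;\lesssim\; \mu(Q^*).
$$

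For the in part, the identity $\Delta_Q^{\text{in}} f|_{Q'} = b_F\cdot D_Q u_F|_{Q'}$, where $u_F$ is the $L^\infty$-bounded, $b_F$-adapted martingale on $\mathcal{T}_F$ (so $|u_F|\leq 2$ and $\langle u_F\rangle_Q = a_Q$ for every $Q\in\mathcal{T}_F$) and $D_Q u_F$ is the standard dyadic martingale difference, gives
$$
\sum_k|\Delta_k^{\text{in}} f|^2 \;\leq\; \sum_{F\in\mathcal{F}_{Q^*}}|b_F|^2\, S_F^2,\qquad S_F:=\Big(\sum_k|D_k u_F|^2\Big)^{1/2}.
$$
The inequality $(\sum X_F^2)^{p/2}\leq \sum X_F^p$ (valid for $p\leq 2$) then yields
$$
\Big\|\Big(\sum_k|\Delta_k^{\text{in}} f|^2\Big)^{1/2}\Big\|_{L^p(\mu)}^p \;\leq\; \sum_{F\in\mathcal{F}_{Q^*}}\int_F |b_F|^p\, S_F^p\,d\mu.
$$

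The crux is the per-tree bound $\int_F|b_F|^p S_F^p\,d\mu \lesssim \mu(F)$. The stopping rule (2) tells us that the weight $w:=|b_F|^p$ satisfies $\langle w\rangle_Q \leq 2^{p'+1}A^{p'}$ for every $Q\in\mathcal{T}_F$. My plan is to apply Stein's vector-valued inequality (Lemma \ref{Stein}) to the sequence of tree-adapted conditional expectations of $u_F$, transferring the weighted $L^p(w\,d\mu)$-bound for $S_F$ onto the unweighted $L^p$-bound for $u_F$; since $|u_F|\leq 2$ the latter is $\lesssim\mu(F)^{1/p}$. Summing over $F$ and applying Lemma \ref{Carleson-sequence} once more finishes the proof. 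The main obstacle is precisely this weighted step: for $p<2$ the $L^p$-testing hypothesis does not upgrade to an $L^2$-bound on $b_F$, so the classical orthogonality shortcut fails, and one must work directly in $L^p$ via Stein together with the bounded-averages structure of the stopping tree; this is the technical heart of \cite{MM}.
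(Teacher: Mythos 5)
There is a genuine gap, and it sits exactly where the lemma is hard. Note first that the paper itself does not prove Lemma \ref{Delta-k}: it is quoted from \cite{MM}, so a self-contained argument must actually supply the technical core of that reference. Your decomposition into ``in'' and ``out'' parts is the right starting point, and the out/stopping part is essentially correct: after regrouping $\sum_G\int_G|b_{G^\pi}|^p\,d\mu\le\sum_F\int_F|b_F|^p\,d\mu$ (using disjointness of the stopping children of each $F$), condition (3), Lemma \ref{Carleson-sequence} and the extra top-level term $\langle f\rangle_{Q^*}b_{Q^*}$ all go through. The problem is the ``in'' part. First, the auxiliary object $u_F$ you build the argument on need not exist: the numbers $a_Q=\langle f\rangle_Q/\langle b_F\rangle_Q$ are not the averages of any single function, because when all children $Q'$ of a tree cube $Q$ stay in the tree the consistency relation $\mu(Q)a_Q=\sum_{Q'}\mu(Q')a_{Q'}$ would be forced, and it fails in general. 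So $\{a_Q\}$ is not a martingale, $|u_F|\le 2$ and ``conditional expectations of $u_F$'' are not available, and neither Burkholder nor Lemma \ref{Stein} can be invoked on it as stated.

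Second, even granting such a structure, the step you yourself flag as the crux, $\int_F|b_F|^pS_F^p\,d\mu\lesssim\mu(F)$, is precisely what is never proved: you only announce a ``plan'' to use Stein's inequality to ``transfer'' a weighted bound, but Lemma \ref{Stein} is an unweighted inequality relating $E_{n_k}f_k$ to $f_k$; it does not interact with the weight $w=|b_F|^p$ (which is only known to have bounded averages over tree cubes, via stopping condition (2)), nor does it convert an $L^\infty$ bound on the averages $a_Q$ into a square-function bound. The naive route of subadditivity, $\int_F w\,S_F^p\le\sum_{Q'}|a_{Q'}-a_{\pi Q'}|^p\int_{Q'}w\lesssim\sum_{Q'}|a_{Q'}-a_{\pi Q'}|^p\mu(Q')$, is genuinely lossy for $p<2$ (coefficients with $\sum_k|c_k|^2<\infty$ but $\sum_k|c_k|^p=\infty$ show it cannot close), so some real argument exploiting the specific choice of the stopping thresholds in conditions (1)--(2) is required; this is the content of the corresponding lemma in \cite{MM}. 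As it stands, your proposal reduces Lemma \ref{Delta-k} to its hardest sub-statement and then defers it, so the proof is incomplete at its central point.
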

\section{Reductions of The Proof}
\subsection{Reduction to a priori assumption.}
\begin{proposition}
If under the additional priori assumption $||g_{\lambda}^*||_{L^p(\mu) \rightarrow L^p(\mu)} < \infty$ we obtain
$||g_{\lambda}^*||_{L^p(\mu) \rightarrow L^p(\mu)} \lesssim 1+\mathcal{G}$, then Theorem $\ref{Theorem}$ immediately
follows without a priori assumption.
\end{proposition}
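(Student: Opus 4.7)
The plan is to approximate $g_\lambda^*$ by a family of $t$-truncated operators $g_{\lambda,\epsilon}^*$, each of which is $L^p(\mu)$-bounded by a crude direct estimate, apply the hypothetical conditional bound uniformly in $\epsilon$, and then pass to the limit via monotone convergence.

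Concretely, for $0 < \epsilon < 1$ I would define
\[
g_{\lambda,\epsilon}^*(f)(x) := \bigg(\int_{\epsilon}^{1/\epsilon} \int_{\Rn} \Big(\frac{t}{t+|x-y|}\Big)^{m\lambda} |\theta_t f(y)|^2 \frac{d\mu(y)\, dt}{t^{m+1}}\bigg)^{1/2},
\]
corresponding to the truncated kernel $s_t^{\epsilon}(x,y) := s_t(x,y)\mathbf{1}_{[\epsilon,1/\epsilon]}(t)$. This kernel inherits the Standard conditions with the same implicit constants as $s_t$. Since the pseudo-accretive system $\{b_Q\}$ is unchanged, hypotheses (1), (2), (3) of Theorem~\ref{Theorem} persist for $g_{\lambda,\epsilon}^*$, and since truncation only shrinks the domain of the testing integral, the testing constant $\mathcal{G}_\epsilon$ satisfies $\mathcal{G}_\epsilon \leq \mathcal{G}$.

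Next I would verify that $\|g_{\lambda,\epsilon}^*\|_{L^p(\mu) \to L^p(\mu)} \leq C(\epsilon) < \infty$, allowing $C(\epsilon)$ to blow up wildly as $\epsilon \downarrow 0$. A convenient route is the pointwise domination $|\theta_t f(y)| \lesssim \mathcal{M}_\mu f(y)$, where $\mathcal{M}_\mu$ is the non-centered Hardy-Littlewood maximal operator; this follows from the size condition and the upper power bound $\mu(B(x,r)) \lesssim r^m$ via an annular decomposition in $|y-z|$. Integrating the gauge factor $(t/(t+|x-y|))^{m\lambda}$ in $y$ yields a factor $\lesssim t^m$ since $\lambda > 1$, while the remaining $dt/t$ integrates to $2\log(1/\epsilon)$ over $[\epsilon, 1/\epsilon]$. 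One thus arrives at the pointwise bound
\[
\big(g_{\lambda,\epsilon}^* f(x)\big)^2 \lesssim \log(1/\epsilon)\,\mathcal{M}_\mu\big((\mathcal{M}_\mu f)^2\big)(x),
\]
whose $L^{p/2}(\mu)$-quasinorm is controlled by $\log(1/\epsilon)\|f\|_{L^p(\mu)}^2$ via two applications of the $L^p(\mu)$-boundedness of $\mathcal{M}_\mu$ (valid for $1 < p \le \infty$ under the upper power bound on $\mu$).

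With the a priori boundedness in hand, applying the assumed conditional implication to $g_{\lambda,\epsilon}^*$ yields
\[
\|g_{\lambda,\epsilon}^*(f)\|_{L^p(\mu)} \lesssim (1 + \mathcal{G}_\epsilon)\|f\|_{L^p(\mu)} \leq (1 + \mathcal{G})\|f\|_{L^p(\mu)},
\]
with implicit constant independent of $\epsilon$. Since $g_{\lambda,\epsilon}^*(f)(x) \nearrow g_\lambda^*(f)(x)$ monotonically as $\epsilon \downarrow 0$, the monotone convergence theorem finishes the proof. The only obstacle would be the bookkeeping in the a priori bound, but since we have unlimited freedom in the $\epsilon$-dependence this is purely technical.
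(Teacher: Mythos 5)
Your overall strategy --- approximate by $t$-truncated operators, apply the hypothetical conditional bound uniformly in the truncation parameter, and pass to the limit by monotone convergence --- is exactly the paper's, and your bookkeeping (the truncated kernels satisfy the Standard conditions with the same constants, and the testing constant only decreases, $\mathcal{G}_\epsilon \le \mathcal{G}$) is correct.

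However, there is a genuine gap in your a priori estimate. You bound the inner quantity pointwise by $\mathcal{M}_\mu\big((\mathcal{M}_\mu f)^2\big)(x)$ and then need
\[
\big\| \mathcal{M}_\mu\big((\mathcal{M}_\mu f)^2\big) \big\|_{L^{p/2}(\mu)} \lesssim \big\| (\mathcal{M}_\mu f)^2 \big\|_{L^{p/2}(\mu)},
\]
which is the second of your two applications of maximal-function boundedness. This requires $\mathcal{M}_\mu$ to be bounded on $L^{p/2}(\mu)$, i.e.\ $p/2 > 1$, so $p > 2$. But the theorem is stated for $p \in (1,2]$, where $p/2 \le 1$ and no such bound is available (at $p=2$ one is at the weak endpoint, and for $p<2$ the exponent is subunital). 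The paper avoids this entirely: rather than inserting a maximal function into $|\theta_t f(y)|$ and then integrating the gauge, it keeps the $z$-integration intact, splits into the regions $|z-x| > 2|y-x|$ and $|z-x| \le 2|y-x|$, and (using $\alpha \le m(\lambda-2)/2$ and a Minkowski/Young-type estimate for the near part) proves the stronger \emph{linear} pointwise bound
\[
\bigg(\int_{\Rn} \Big(\frac{t}{t+|x-y|}\Big)^{m\lambda} |\theta_t f(y)|^2\,\frac{d\mu(y)}{t^m}\bigg)^{1/2} \lesssim M_\mu f(x),
\]
uniformly in $t$. Then $g_{\lambda,\epsilon}^*f(x) \lesssim (\log(1/\epsilon))^{1/2} M_\mu f(x)$ and a \emph{single} application of the $L^p(\mu)$-boundedness of the centered maximal function ($p>1$) closes the estimate for the full range. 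You should rework your a priori bound along those lines; the double-maximal route simply does not reach $p \le 2$.

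A secondary issue: you invoke the \emph{non-centered} Hardy--Littlewood maximal operator. For a general Borel measure satisfying only the upper power bound (no doubling assumed), the non-centered maximal function is not guaranteed to be bounded on $L^p(\mu)$. The \emph{centered} maximal function is, by the Besicovitch covering theorem, and is what the paper uses; since your annular decomposition only ever needs centered averages, you should switch to the centered operator in any case.
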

\begin{proof}
We write the truncation of the kernel $s_t(x,y)$ as $s_t^i(x,y)=s_t(x,y)\mathbf{1}_{[1/i,i]}(t)$, for any $i > 0$. It is easy to check that $\{s_t^i\}_{i>0}$
satisfy the Standard condition. We write the truncation of Littlewood-Paley $g_\lambda^*$-function as
\begin{align*}
g_{\lambda,i}^*(f)(x)
= \bigg(\iint_{\R^{n+1}_{+}} \Big(\frac{t}{t + |x - y|}\Big)^{m \lambda} |\theta^i_t f(y)|^2
\frac{d\mu(y) dt}{t^{m+1}}\bigg)^{1/2},\ \lambda > 1,
\end{align*}
where the linear form
$$\theta_t^i f(x) = \int_{\Rn} s_t^i(x,y) f(y) d\mu(y).$$
We claim that for any $x \in \Rn$ and $t>0$ we have the pointwise control
\begin{equation}\label{pointwise}
H(x):= \bigg(\int_{\Rn} \Big(\frac{t}{t + |x - y|}\Big)^{m \lambda} |\theta_t f(y)|^2
\frac{d\mu(y)}{t^m}\bigg)^{1/2}
\lesssim M_{\mu} f(x),
\end{equation}
where $M_{\mu}$ is the centered maximal function
$$ M_{\mu}f(x) = \sup_{r>0} \frac{1}{\mu(B(x,r))} \int_{B(x,r)} |f(y)| d\mu(y).$$
This implies that
$$ ||g_{\lambda,i}^*(f)||_{L^p(\mu)} \lesssim (2 \log i)^{1/2} ||M_{\mu}f||_{L^p(\mu)} \lesssim (2 \log i)^{1/2} ||f||_{L^p(\mu)} < \infty.$$
Accordingly, it yields that
\begin{align*}
||g_{\lambda}^*(f)||_{L^p(\mu)}
&=\lim_{i \rightarrow \infty} ||g_{\lambda,i}^*(f)||_{L^p(\mu)}
\leq \lim \sup_{i \rightarrow \infty} ||g_{\lambda,i}^*||_{L^p(\mu) \rightarrow L^p(\mu)} ||f||_{L^p(\mu)}\\
&\lesssim \lim \sup_{i \rightarrow \infty} (1+\mathcal{G}_i) ||f||_{L^p(\mu)}
\leq (1+\mathcal{G}) ||f||_{L^p(\mu)}.
\end{align*}

It only remains to prove the inequality $(\ref{pointwise})$. Indeed, using the size condition of the kernel, one can get
$$ H(x) \lesssim \bigg[\int_{\Rn}\bigg(\int_{\Rn}\frac{t^\alpha}{(t+|y-z|)^{m+\alpha}}|f(z)|d\mu(z)\bigg)^2 \Big(\frac{t}{t + |x - y|}\Big)^{m
\lambda}\frac{d\mu(y)}{t^m}\bigg]^{1/2}.$$
Thus, it suffices to bound the following two parts:
$$ H_1(x) := \bigg[\int_{\Rn}\bigg(\int_{|z-x| > 2|y-x|}\frac{t^\alpha}{(t+|y-z|)^{m+\alpha}}|f(z)|d\mu(z)\bigg)^2 \Big(\frac{t}{t + |x - y|}\Big)^{m
\lambda}\frac{d\mu(y)}{t^m}\bigg]^{1/2},$$
and
$$ H_2(x) := \bigg[\int_{\Rn}\bigg(\int_{|z-x|\leq 2|y-x|}\frac{t^\alpha}{(t+|y-z|)^{m+\alpha}}|f(z)|d\mu(z)\bigg)^2 \Big(\frac{t}{t + |x - y|}\Big)^{m
\lambda}\frac{d\mu(y)}{t^m}\bigg]^{1/2}.$$
For $H_1$, it holds that $|y-z| \geq |x-z| - |x-y| \gtrsim |x-z|$. Hence, it yields
$$ H_1(x) \lesssim \int_{\Rn}\frac{t^\alpha}{(t+|x-z|)^{m+\alpha}}|f(z)|d\mu(z) \lesssim M_{\mu}f(x).$$
For $H_2$, we have for $0 < \alpha \leq m(\lambda-2)/2$ that
$$ \Big(\frac{t}{t + |x - y|}\Big)^{m \lambda/2} \leq \Big(\frac{t}{t + |x - y|}\Big)^{m +\alpha} \lesssim \frac{t^{m+\alpha}}{(t + |x - z|)^{m+\alpha}}. $$
Combining with Young's inequality, this gives us that
\begin{align*}
H_2(x)
&\lesssim \bigg[\int_{\Rn}\bigg(\int_{|z-x|\leq 2|y-x|}\frac{t^\alpha}{(t+|y-z|)^{m+\alpha}}\frac{t^{m+\alpha}}{(t + |x - z|)^{m+\alpha}}|f(z)|d\mu(z)\bigg)^2
\frac{d\mu(y)}{t^m}\bigg]^{1/2} \\
&\lesssim t^{m/2} || \phi*\psi_x ||_{L^2(\mu)}
\leq t^{m/2} || \phi ||_{L^2(\mu)} || \psi_x ||_{L^1(\mu)}
\lesssim || \psi_x ||_{L^1(\mu)} \lesssim M_{\mu}f(x),
\end{align*}
where
$$ \phi(z)= \frac{t^\alpha}{(t+|z|)^{m+\alpha}}, \ \ \psi_x(z)= \frac{t^\alpha}{(t+|x-z|)^{m+\alpha}}|f(z)|.$$
\end{proof}
\subsection{Reduction to a Carleson estimate.} We will further make reductions under the priori assumption $||g_{\lambda}^*||_{L^p(\mu) \rightarrow L^p(\mu)} < \infty$,  For any
$\kappa > 0$, we define
$$\mathcal{G}_{\text{loc}}(\kappa):=\sup_{\substack{Q \subset \Rn \\ Q: cube}}\bigg[\frac{1}{\mu(\kappa Q)}\int_Q \bigg(\int_{0}^{\ell(Q)} \int_\Rn
\Big(\frac{t}{t+|x-y|}\Big)^{m\lambda} |\theta_t \mathbf{1}_{Q}(y)|^2 \frac{d\mu(y) dt}{t^{m+1}} \bigg)^{p/2} d\mu(x)\bigg]^{1/p},$$

$$\mathcal{G}_{\text{glo}}(\kappa):=\sup_{\substack{Q \subset \Rn \\ Q: cube}}\bigg[\frac{1}{\mu(\kappa Q)}\int_Q \bigg(\int_{0}^{\ell(Q)} \int_\Rn
\Big(\frac{t}{t+|x-y|}\Big)^{m\lambda} |\theta_t \mathbf{1}(y)|^2 \frac{d\mu(y) dt}{t^{m+1}} \bigg)^{p/2} d\mu(x)\bigg]^{1/p}.$$
Once it has been proved
\begin{equation}\label{key-1}
||g_{\lambda}^*||_{L^p(\mu) \rightarrow L^p(\mu)} \leq C_1 (1+\mathcal{G}_{\text{glo}}(9)) \leq C_2 (1+\mathcal{G}_{\text{loc}}(3)),
\end{equation}
and
\begin{equation}\label{key-2}
\mathcal{G}_{\text{loc}}(3) \leq C_3 (1+\mathcal{G}) + C_2^{-1} ||g_{\lambda}^*||_{L^p(\mu) \rightarrow L^p(\mu)}/2,
\end{equation}
we shall get
$$
||g_{\lambda}^*||_{L^p(\mu) \rightarrow L^p(\mu)} \leq C (1+\mathcal{G}) + ||g_{\lambda}^*||_{L^p(\mu)\rightarrow L^p(\mu)}/2.
$$
Thus, we are reduced to proving $(\ref{key-1})$ and $(\ref{key-2})$. In section $\ref{sec-T1}$, we will see that the proof of $(\ref{key-1})$ is largely
similar to that of $(\ref{key-2})$. Hence, we focus on showing $(\ref{key-2})$.
\subsection{Reduction to good cubes.}\label{reduction-good}
In order to obtain $(\ref{key-2})$, it suffices to bound
$$ \bigg[\int_{Q_0} \bigg(\int_{0}^{\ell(Q_0)} \int_\Rn \Big(\frac{t}{t+|x-y|}\Big)^{m\lambda} |\theta_t f(y)|^2 \frac{d\mu(y) dt}{t^{m+1}} \bigg)^{p/2}
d\mu(x)\bigg]^{1/p} $$
for every given cube $Q_0$ and for an arbitrary fixed function $f$ satisfying $|f| \leq \mathbf{1}_{Q_0}$.
We define $s$ by $2^{s-1} \leq \ell(Q_0) < 2^s$. As a matter of convenience, we write
$$
\mathcal{T}^{\beta}f(x) := \sum_{\substack{R \in \mathcal{D(\beta)}\\ \ell(R) \leq 2^s}} \mathbf{1}_{R}(x)\int_{\ell(R)/2}^{\ell(R)} \int_\Rn \Big(\frac{t}{t+|x-y|}\Big)^{m\lambda} |\theta_t f(y)|^2 \frac{d\mu(y) dt}{t^{m+1}},
$$
and
$$
\mathscr{F}^\beta:=\bigg(\int_{\Rn} \mathbf{1}_{Q_0}(x) \mathcal{T}^{\beta}f(x)^{p/2} d\mu(x)\bigg)^{1/p}.
$$
Similarly, we also define $\mathcal{T}^{\beta}_{good}$ and $\mathcal{T}^{\beta}_{bad}$ by restricting $R \in \mathcal{D}(\beta)$ on $\mathcal{D}_{good}^\beta$ and $\mathcal{D}_{bad}^\beta$ respectively. Naturally, we have $\mathscr{F}_{good}^\beta$ and $\mathscr{F}_{bad}^\beta$ corresponding to $\mathcal{T}^{\beta}_{good}$ and $\mathcal{T}^{\beta}_{bad}$.
Then we have
\begin{align*}
\int_{0}^{\ell(Q_0)} \int_\Rn \Big(\frac{t}{t+|x-y|}\Big)^{m\lambda} |\theta_t f(y)|^2 \frac{d\mu(y) dt}{t^{m+1}} \leq \mathcal{T}^{\beta}f(x).
\end{align*}
Thus, it is enough to show
\begin{equation}\label{E-F}
\mathbb{E}_{\beta}(\mathscr{F}^\beta)
\leq \big[ C_3 (1+\mathcal{G}) + C_2^{-1} ||g_{\lambda}^*||_{L^p(\mu) \rightarrow L^p(\mu)}/2 \big]\mu(3Q))^{1/p}.
\end{equation}
Actually, it is easy to observe that
$$ \mathbb{E}_{\beta}(\mathscr{F}^\beta)
\leq \mathbb{E}_{\beta}(\mathscr{F}_{good}^\beta) + \mathbb{E}_{\beta}(\mathscr{F}_{bad}^\beta).$$

We first estimate $\mathbb{E}_{\beta}(\mathscr{F}_{bad}^\beta)$. Note that $\mathbb{E}(g^\epsilon) \leq (\mathbb{E} g)^\epsilon$ for each $0 < \epsilon \leq 1$, and that $\mathbb{E}(\mathbf{1}_{bad}(R+\beta)) \leq c(r) \rightarrow 0$ as $r \rightarrow \infty$, see
\cite{H2014, NTV2003}. Another notable fact is that the position and goodness of $R+\beta$ are independent for any $R \in \mathcal{D}_0$. Making use of these facts, we conclude that
\begin{align*}
&\mathbb{E}_{\beta}(\mathscr{F}_{bad}^\beta)
\leq \bigg[\int_{\Rn} \mathbf{1}_{Q_0}(x) \big(\mathbb{E}_{\beta}\mathcal{T}_{bad}^\beta f(x) \big)^{p/2} d\mu(x)\bigg]^{1/p}\\
&=\bigg[\int_{\Rn} \mathbf{1}_{Q_0}(x) \bigg(\sum_{\substack{R \in \mathcal{D}_0\\ \ell(R) \leq 2^s}} \mathbb{E}(\mathbf{1}_{bad}(R+\beta))
\mathbb{E} \Big(\mathbf{1}_{R+\beta}(x)\int_{\ell(R+\beta)/2}^{\ell(R+\beta)} \int_\Rn \Big(\frac{t}{t+|x-y|}\Big)^{m\lambda}\\
&\quad\quad \times |\theta_t f(y)|^2 \frac{d\mu(y) dt}{t^{m+1}} \Big)\bigg)^{p/2} d\mu(x)\bigg]^{1/p}\\
&\leq c(r)^{1/2} ||g_{\lambda}^*(f)||_{L^p(\mu)}
\leq  c(r)^{1/2} ||g_{\lambda}^*||_{L^p(\mu) \rightarrow L^p(\mu)} \mu(3Q_0)^{1/p}.
\end{align*}
For fixed $C_2$, choosing $r$ sufficiently large such that $c(r) \leq (2 C_2)^{-2}$. Hence, it immediately yields that
\begin{align*}
\mathbb{E}_{\beta}(\mathscr{F}_{bad}^\beta)
\leq (2C_2)^{-1} ||g_{\lambda}^*||_{L^p(\mu) \rightarrow L^p(\mu)} \mu(3Q_0)^{1/p}.
\end{align*}
Consequently, the inequality $(\ref{E-F})$ has benn reduced to proving that there exists a constant $C_3$ such that for any  random variable $\beta$, it holds
\begin{equation}\label{F-good}
\mathscr{F}_{good}^\beta \leq C_3 (1+\mathcal{G}) \mu(3Q_0)^{1/p}.
\end{equation}
From now on, $\beta$ is fixed and simply denote $\mathcal{D}=\mathcal{D}(\beta)$, $\mathcal{T}_{good}=\mathcal{T}_{good}^\beta$.
\subsection{Reduction to martingale difference operators.}\label{reduction-martingale}
Since $f$ is supported in $Q_0$, we decompose
$$f=\sum_{\substack{Q^* \in \mathcal{D}\\ \ell(Q^*)=2^s \\ Q^* \cap Q_0 \neq \emptyset}} \sum_{\substack{Q \in \mathcal{D} \\ Q \subset Q^*}}\Delta_Q f.$$
We define the truncation of operator $\mathcal{T}_{good}$
$$ \mathcal{T}_{good,\zeta} f(x) := \sum_{\substack{R \in \mathcal{D}_{good}\\ 2^{-\zeta} < \ell(R) \leq 2^s}} \mathbf{1}_{R}(x)\int_{\ell(R)/2}^{\ell(R)}
\int_\Rn \Big(\frac{t}{t+|x-y|}\Big)^{m\lambda} |\theta_t f(y)|^2 \frac{d\mu(y) dt}{t^{m+1}},\ \zeta >0.$$
Therefore, for every $x \in Q_0$, we have
\begin{align*}
&\Big| \mathcal{T}_{good} f(x) - \mathcal{T}_{good,\zeta} \Big(\sum_{\substack{Q^* \in \mathcal{D}\\ \ell(Q^*)=2^s \\ Q^* \cap Q_0 \neq \emptyset}}
\sum_{\substack{Q \subset Q^* \\ \ell(Q)>2^{-\zeta}}}\Delta_Q f\Big)(x) \Big| \\
&\leq \big| \mathcal{T}_{good} f(x) - \mathcal{T}_{good,\zeta} f(x) \big| + \Big| \mathcal{T}_{good,\zeta} f(x) - \mathcal{T}_{good,\zeta}
\Big(\sum_{\substack{Q^* \in \mathcal{D}\\ \ell(Q^*)=2^s \\ Q^* \cap Q_0 \neq \emptyset}} \sum_{\substack{Q \subset Q^* \\ \ell(Q)>2^{-\zeta}}}\Delta_Q
f\Big)(x)\Big| \\
&\lesssim \sum_{\substack{R \in \mathcal{D}_{good}\\ \ell(R) \leq 2^{-\zeta} }} \mathbf{1}_{R}(x)\int_{\ell(R)/2}^{\ell(R)} \int_\Rn
\Big(\frac{t}{t+|x-y|}\Big)^{m\lambda} |\theta_t f(y)|^2 \frac{d\mu(y) dt}{t^{m+1}} \\
&\quad\quad + \mathcal{T}_{good,\zeta} \Big(f - \sum_{\substack{Q^* \in \mathcal{D}\\ \ell(Q^*)=2^s \\ Q^* \cap Q_0 \neq \emptyset}} \sum_{\substack{Q \subset
Q^* \\ \ell(Q)>2^{-\zeta}}}\Delta_Q f\Big)(x) \\
&\leq \int_{0}^{2^{-\zeta}} \int_\Rn \Big(\frac{t}{t+|x-y|}\Big)^{m\lambda} |\theta_t f(y)|^2 \frac{d\mu(y) dt}{t^{m+1}} + g_{\lambda}^*\Big(f -
\sum_{\substack{Q^* \in \mathcal{D}\\ \ell(Q^*)=2^s \\ Q^* \cap Q_0 \neq \emptyset}} \sum_{\substack{Q \subset Q^* \\ \ell(Q)>2^{-\zeta}}}\Delta_Q f\Big)(x)^2.
\end{align*}
Notice that $g_{\lambda}^*$ is priori bounded. Then Lebesgue's dominated convergence theorem gives that
$$ \lim_{\zeta \rightarrow \infty} \bigg( \int_{\Rn} \mathbf{1}_{Q_0} \Big|\mathcal{T}_{good} f(x) - \mathcal{T}_{good,\zeta} \Big(\sum_{\substack{Q^* \in
\mathcal{D}\\ \ell(Q^*)=2^s \\ Q^* \cap Q_0 \neq \emptyset}} \sum_{\substack{Q \subset Q^* \\ \ell(Q)>2^{-\zeta}}}\Delta_Q f(x) \Big) \Big|^{p/2}
d\mu(x)\bigg)^{1/p} = 0.$$
This gives us that, in order to obtain $(\ref{F-good})$, we only need to prove
\begin{equation}\aligned\label{Q0-mar}
&\bigg[\int_{\Rn} \mathbf{1}_{Q_0}(x)\bigg(\sum_{\substack{R \in \mathcal{D}_{good}\\ 2^{-\zeta} < \ell(R) \leq 2^s}}
\mathbf{1}_{R}(x)\int_{\ell(R)/2}^{\ell(R)} \int_\Rn \Big(\frac{t}{t+|x-y|}\Big)^{m\lambda} \\
&\quad\quad\quad\quad\quad \times \Big| \theta_t \Big( \sum_{\substack{Q^* \in \mathcal{D}\\ \ell(Q^*)=2^s \\ Q^* \cap Q_0 \neq \emptyset}} \sum_{\substack{Q
\in \mathcal{D} \\ Q \subset Q^* \\ \ell(Q)>2^{-\zeta}}}\Delta_Q f \Big)(y) \Big|^2 \frac{d\mu(y) dt}{t^{m+1}}\bigg)^{p/2} d\mu(x)\bigg]^{1/p} \\
&\leq C_3 (1+\mathcal{G}) \mu(3Q_0)^{1/p}.
\endaligned
\end{equation}
It is important to note that there are only finitely many $Q^*$ belonging to the collection
$\{Q^* \in \mathcal{D}; \ell(Q^*)=2^s, Q^* \cap Q_0 \neq \emptyset \}$. And for such $Q^*$, it holds that $Q^* \subset 3Q_0$.
Accordingly, to conclude $(\ref{Q0-mar})$, we only need to show that for each fixed $\zeta$ and for each fixed $Q^*$, there holds that
\begin{equation}\aligned\label{Q*-mar}
&\bigg[\int_{\Rn} \mathbf{1}_{Q_0}(x)\bigg(\sum_{\substack{R \in \mathcal{D}_{good}\\ 2^{-\zeta} < \ell(R) \leq 2^s}}
\mathbf{1}_{R}(x)\int_{\ell(R)/2}^{\ell(R)} \int_\Rn \Big(\frac{t}{t+|x-y|}\Big)^{m\lambda} \\
&\quad\quad\quad\quad\quad \times \Big| \sum_{\substack{Q \in \mathcal{D}: Q \subset Q^* \\ \ell(Q)>2^{-\zeta}}}\theta_t(\Delta_Q f)(y) \Big|^2 \frac{d\mu(y)
dt}{t^{m+1}}\bigg)^{p/2} d\mu(x)\bigg]^{1/p} \\
&\leq C_3 (1+\mathcal{G}) \mu(Q^*)^{1/p}.
\endaligned
\end{equation}
\qed
\section{Proof of the reduction of Carleson estimate $(\ref{key-2})$}
In this section, we undertake to deal with the estimate of the inequality $(\ref{Q*-mar})$. In any case, for fixed cube $R \in \mathcal{D}_{good}$, we perform
the splitting
$$ \sum_{Q \in \mathcal{D}}
=\sum_{\ell(Q)<\ell(R)}
+\sum_{\substack{\ell(Q) \geq \ell(R) \\ d(Q,R) > \ell(R)^{\gamma} \ell(Q)^{1-\gamma}}}
+\sum_{\substack{\ell(R) \leq \ell(Q) \leq 2^r \ell(R) \\ d(Q,R) \leq \ell(R)^{\gamma}\ell(Q)^{1-\gamma}}}
+\sum_{\substack{\ell(Q) > 2^r \ell(R)\\d(Q,R) \leq \ell(R)^{\gamma} \ell(Q)^{1-\gamma}}}. $$
Thus, the left hand side of $(\ref{Q*-mar})$ is dominated by correspondingly four pieces, which are denoted
$\mathcal{G}_{less}$,$\mathcal{G}_{sep}$,$\mathcal{G}_{adj}$ and $\mathcal{G}_{nes}$ . We shall discuss the four terms.

Throughout the paper, the inequality $2^j < D(Q,R)/\ell(R) \leq 2^{j+1}$ will be abbreviated as $D(Q,R)/\ell(R) \thicksim 2^j$, where
$D(Q,R)=\ell(Q)+\ell(R)+d(Q,R)$. Let $\theta(j):=\lceil \frac{j \gamma + r}{1-\gamma}\rceil$, where $\lceil a \rceil$ is the smallest integer bigger than or
equal to $a$. Additionally, we use $Q^{(k)} \in \mathcal{D}$ to denote the unique cube for which $\ell(Q^{(k)}) = 2^k \ell(Q)$ and $Q \subset Q^{(k)}$. The
cube $Q^{(k)}$ is called as the $k$ generation older dyadic ancestor of $Q$. The Whitney region $W_R = R \times (\ell(R)/2, \ell(R)]$ for any $R \in
\mathcal{D}$.

\subsection{The Case $\ell(Q) < \ell(R)$.}\label{sec-less}
Before starting the proof, we first present a key lemma.
\begin{lemma}\label{estimate-1}
Let $0 < \alpha \leq m(\lambda - 2)/2$. Assume that $Q, R \in \mathcal{D}$ are given cubes satisfying $\ell(R)=2^i \ell(Q)$ and $D(Q,R)/\ell(R) \thicksim 2^j $
for $i \geq 1$, $j \geq 0$. Suppose that $\ell(Q) < \ell(R) \leq 2^s$ and $(x,t) \in W_R$. Then, if
$S = Q^{(i+j+\theta(j))}$, we have the following estimate
\begin{equation*}
\bigg( \int_{\Rn} |\theta_t(\Delta_Q f)(y)|^2 \Big(\frac{t}{t+|x-y|}\Big)^{m\lambda}\frac{d\mu(y)}{t^m}\bigg)^{1/2}
\lesssim 2^{-\alpha(i+\frac{3}{4}j)} \ell(S)^{-m} \big\| \Delta_Q f \big\|_{L^1(\mu)}.
\end{equation*}
\end{lemma}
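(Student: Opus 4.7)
The plan is to reduce to a pointwise bound on $|\theta_t(\Delta_Q f)(y)|$ by exploiting the cancellation of the twisted martingale difference, and then to integrate this bound against the $g_\lambda^*$-weight using the upper power bound on $\mu$.

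First, since $\ell(Q) < \ell(R) \leq 2^s = \ell(Q^*)$, we have $Q \subsetneq Q^*$, so $\int \Delta_Q f\, d\mu = 0$. Writing $c_Q$ for the center of $Q$ and using this cancellation,
\[\theta_t(\Delta_Q f)(y) = \int_Q \bigl[s_t(y,z) - s_t(y, c_Q)\bigr]\, \Delta_Q f(z)\, d\mu(z).\]
The H\"older condition is applicable since $|z - c_Q| \leq \ell(Q)/2 < t/2$ (using $i \geq 1$ and $t > \ell(R)/2$). Combined with the elementary comparison $t + |y-z| \gtrsim t + |y - c_Q|$ for $z \in Q$, one obtains
\[|\theta_t(\Delta_Q f)(y)| \lesssim \frac{\ell(Q)^{\alpha}}{(t+|y-c_Q|)^{m+\alpha}}\, \|\Delta_Q f\|_{L^1(\mu)}.\]

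Substituting this estimate reduces the lemma to bounding
\[J := \int_{\Rn} \frac{1}{(t+|y-c_Q|)^{2(m+\alpha)}}\Big(\frac{t}{t+|x-y|}\Big)^{m\lambda} \frac{d\mu(y)}{t^m}.\]
Since $(x,t) \in W_R$ and $D(Q,R)/\ell(R) \sim 2^j$, $|x - c_Q| \sim 2^j t$ when $j \geq 1$. I would split $\Rn$ into $\{|y - c_Q| \geq |x - c_Q|/2\}$ and $\{|y - x| \geq |x - c_Q|/2\}$. On the first set, use $(t + |y-c_Q|)^{-2(m+\alpha)} \lesssim (2^j t)^{-2(m+\alpha)}$ and estimate the remaining $g_\lambda^*$-integral by $\int (t/(t+|x-y|))^{m\lambda}\, d\mu(y)/t^m \lesssim 1$, valid via dyadic annuli against $\mu(B(x,r)) \lesssim r^m$ since $\lambda > 1$. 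On the second set, use $(t/(t+|x-y|))^{m\lambda} \lesssim 2^{-jm\lambda}$ together with the analogous bound $\int d\mu(y)/[(t+|y-c_Q|)^{2(m+\alpha)} t^m] \lesssim t^{-2(m+\alpha)}$. The hypothesis $\alpha \leq m(\lambda - 2)/2$, equivalently $m\lambda \geq 2(m+\alpha)$, lets the first region dominate and yields
\[J \lesssim \frac{2^{-2j(m+\alpha)}}{t^{2(m+\alpha)}}.\]
The case $j = 0$ is handled by the trivial bound $J \lesssim t^{-2(m+\alpha)}$, since $\theta(0)$ is an absolute constant.

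Finally, I would combine everything using $t \sim \ell(R)$, $\ell(Q) = 2^{-i}\ell(R)$, and $\ell(S) = 2^{j + \theta(j)}\ell(R)$. After squaring the target inequality, the claim reduces to the exponent bound
\[-2\alpha i - 2j(m+\alpha) + 2m(j + \theta(j)) = -2\alpha i - 2j\alpha + 2m\theta(j) \leq -2\alpha\bigl(i + \tfrac{3j}{4}\bigr) + C,\]
i.e.\ $2m\theta(j) \leq j\alpha/2 + C$ for some absolute $C$. Since $\theta(j) \leq (j\gamma + r)/(1-\gamma) + 1$ and the standing constraint $m\gamma/(1-\gamma) \leq \alpha/4$ is in force, this holds with $C$ depending only on $m$, $r$ and $\gamma$. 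I anticipate this final bookkeeping to be the main subtle point: it is essential to use both the full H\"older gain $\ell(Q)^{\alpha}$ \emph{and} the sharp exponent $m\lambda \geq 2(m+\alpha)$ so that the $j$-decay is strong enough to absorb the generation-shift factor $2^{2m\theta(j)}$ coming from the identity $\ell(R)^{-2m} = 2^{2m(j+\theta(j))}\ell(S)^{-2m}$.
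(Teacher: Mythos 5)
Your proposal is correct and follows essentially the same route as the paper: cancellation of $\Delta_Q f$ plus the H\"older condition to extract $\ell(Q)^\alpha$, a near/far split of the $y$-integral relative to $Q$, and the same exponent bookkeeping via $m\gamma/(1-\gamma)\leq\alpha/4$. The only cosmetic difference is that you collapse the $z$-integral into a pointwise bound in $(t+|y-c_Q|)^{-(m+\alpha)}$ up front, which lets you replace the paper's Young's-inequality step in the far region with a direct dyadic-annulus estimate.
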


\begin{proof}
We begin by noting that
$$\frac{\ell(Q)^\alpha}{D(Q,R)^{m+\alpha}} \lesssim 2^{-\alpha(i+\frac{3}{4}j)} \ell(S)^{-m}.$$
In fact, the definition of $S$ gives that
$$ \ell(S) = 2^{i+j+\theta(j)} \ell(Q) = 2^{j+\theta(j)} \ell(R) \thickapprox 2^{j+j \frac{\gamma}{1-\gamma}} \ell(R).$$
Thus, we by $\frac{m\gamma}{1-\gamma} < \frac{\alpha}{4}$ have
\begin{align*}
\frac{\ell(Q)^\alpha}{D(Q,R)^{m+\alpha}}
\lesssim 2^{-\alpha i} 2^{-(m+\alpha)j}  \ell(R)^{-m}
\lesssim 2^{-\alpha i} 2^{-(\alpha-\frac{m\gamma}{1-\gamma})j} \ell(S)^{-m}
< 2^{-\alpha(i+\frac{3}{4}j)} \ell(S)^{-m}.
\end{align*}

Let $c_Q$ be the center of $Q$. Since $\ell(Q) < \ell(R) \leq 2^s$, we get the vanishing property $\int_Q \Delta_Q f d\mu =0$. Then we have
$$|\theta_t(\Delta_Q f)(y)|
= \bigg|\int_{Q}(s_t(y,z)-s_t(y,c_Q)) \Delta_Q f(z) d\mu(z)\bigg|
\lesssim \int_{Q} \frac{\ell(Q)^\alpha}{(t+|y-z|)^{m+\alpha}}|\Delta_Q f(z)| d\mu(z). $$
For $x \in R$ and $z \in Q$, $|x-z|\geq d(Q,R)$. We will consider two subcases.

First, we analyze the contribution of the subregion in which $y:|y-x|\leq \frac12 d(Q,R)$.
In this case, $t + |y-z| \geq t + |x-z|-|x-y| \gtrsim \ell(R) + d(Q,R) \thickapprox D(Q,R)$.
Thus, we get
$$|\theta_t(\Delta_Q f)(y)|
\lesssim \frac{\ell(Q)^\alpha}{D(Q,R)^{m+\alpha}} \big\| \Delta_Q f \big\|_{L^1(\mu)},$$
and
\begin{align*}
\bigg( \int_{y:|y-x|\leq \frac12 d(Q,R)} |\theta_t(\Delta_Q f)(y)|^2 \Big(\frac{t}{t+|x-y|}\Big)^{m\lambda}\frac{d\mu(y)}{t^m}\bigg)^{1/2}
\lesssim 2^{-\alpha(i+\frac{3}{4}j)} \ell(S)^{-m} \big\| \Delta_Q f \big\|_{L^1(\mu)}.
\end{align*}

Secondly, we treat the contribution made by those $y:|y-x| > \frac12 d(Q,R)$. Since $|y-x| > \frac12 d(Q,R)$, we have
$$ \frac{t}{t+|x-y|} \lesssim \frac{\ell(R)}{\ell(R)+d(Q,R)}.$$
Accordingly, together with Young's inequality, this yields that
\begin{align*}
&\bigg(\int_{y:|y-x|> \frac12 d(Q,R)} |\theta_t(\Delta_Q f)(y)|^2 \Big(\frac{t}{t+|x-y|}\Big)^{m\lambda}\frac{d\mu(y)}{t^m}\bigg)^{1/2} \\
&\lesssim \frac{\ell(R)^{\frac{m\lambda}{2}-m-\alpha} \ell(Q)^\alpha t^{-m/2}}{(\ell(R)+d(Q,R))^{\frac{m\lambda}{2}}}
\bigg[\int_{\Rn}\bigg(\int_{\Rn}\Big(\frac{t}{(t+|y-z|)}\Big)^{m+\alpha} |\Delta_Q f(z)| d\mu(z)\bigg)^2 d\mu(y)\bigg]^{1/2} \\
&\leq \frac{\ell(Q)^\alpha}{(\ell(R)+d(Q,R))^{m+\alpha}} t^{-m/2}  \Big\|\Big(\frac{t}{t+|\cdot|}\Big)^{m+\alpha}\Big\|_{L^2(\mu)} \big\|\Delta_Q f
\big\|_{L^1(\mu)} \\
&\lesssim \frac{\ell(Q)^\alpha}{D(Q,R)^{m+\alpha}} \big\| \Delta_Q f \big\|_{L^1(\mu)}
\lesssim 2^{-\alpha(i+\frac{3}{4}j)} \ell(S)^{-m} \big\| \Delta_Q f \big\|_{L^1(\mu)},
\end{align*}
where we have used the condition $0 < \alpha \leq m(\lambda - 2)/2$.

The proof of Lemma $\ref{estimate-1}$ is complete.
\end{proof}

Now we begin to bound the first part $\mathcal{G}_{less}$. In this case, the restrictions on $Q,R$ enable us to rewrite the summing index
\begin{align*}
\mathcal{G}_{less}
&=\bigg[\int_{\Rn} \mathbf{1}_{Q_0}(x)\bigg(\sum_{k \leq s} \sum_{\substack{R \in \mathcal{D}_{k,good}\\ 2^{-\zeta} < \ell(R) \leq 2^s}}
\mathbf{1}_{R}(x)\int_{\ell(R)/2}^{\ell(R)} \int_\Rn \Big(\frac{t}{t+|x-y|}\Big)^{m\lambda} \\
&\quad\quad\quad\quad\quad \times \Big| \sum_{i \geq 1}\sum_{j \geq 0}\sum_{\substack{Q \in \mathcal{D}_{k-i}:Q \subset Q^* \\ \ell(Q)>2^{-\zeta} \\
D(Q,R)/\ell(R) \thicksim 2^j}}\theta_t(\Delta_Q f)(y) \Big|^2 \frac{d\mu(y) dt}{t^{m+1}}\bigg)^{p/2} d\mu(x)\bigg]^{1/p}.
\end{align*}
Moreover, notice that it must have $\ell(Q) < 2^s$. Then using the Minkowski's inequality and Lemma $\ref{estimate-1}$ we gain that
\begin{align*}
\mathcal{G}_{less}
&\leq \bigg[\int_{\Rn}\bigg(\sum_{k \leq s} \sum_{\substack{R \in \mathcal{D}_{k,good}\\ \ell(R) \leq 2^s}} \mathbf{1}_{R}(x) \bigg(\sum_{i \geq 1}\sum_{j \geq
0}\sum_{\substack{Q \in \mathcal{D}_{k-i}:Q \subset Q^* \\ D(Q,R)/\ell(R) \thicksim 2^j}} \bigg(\int_{\ell(R)/2}^{\ell(R)} \int_\Rn
\Big(\frac{t}{t+|x-y|}\Big)^{m\lambda} \\
&\quad\quad\quad\quad\quad \times |\theta_t(\Delta_Q f)(y)|^2 \frac{d\mu(y) dt}{t^{m+1}}\bigg)^{1/2}\bigg)^2\bigg)^{p/2} d\mu(x)\bigg]^{1/p}\\
&\lesssim \bigg\|\bigg(\sum_{k \leq s} \sum_{\substack{R \in \mathcal{D}_{k,good}\\ \ell(R) \leq 2^s}} \mathbf{1}_{R} \Big(\sum_{i \geq 1}\sum_{j \geq
0}2^{-\alpha(i+\frac{3}{4}j)} \sum_{\substack{Q \in \mathcal{D}_{k-i}:Q \subset Q^* \\ D(Q,R)/\ell(R) \thicksim 2^j}} \ell(S)^{-m} \big\| \Delta_Q f
\big\|_{L^1(\mu)}\Big)^2  \bigg)^{1/2} \bigg\|_{L^p(\mu)}.
\end{align*}
Additionally, combining H\"{o}lder's inequality and the fact that $(\sum_i |a_i|)^{1/2} \leq \sum_i |a_i|^{1/2}$, we have

\begin{align*}
\mathcal{G}_{less}
&\lesssim \bigg\|\bigg(\sum_{k \leq s} \sum_{R \in \mathcal{D}_{k,good}} \mathbf{1}_{R} \sum_{i \geq 1}\sum_{j \geq 0}2^{-\alpha(i+\frac{3}{4}j)}
\Big(\sum_{\substack{Q \in \mathcal{D}_{k-i}:Q \subset Q^* \\ D(Q,R)/\ell(R) \thicksim 2^j}} \ell(S)^{-m} \big\| \Delta_Q f \big\|_{L^1(\mu)}\Big)^2
\bigg)^{1/2} \bigg\|_{L^p(\mu)} \\
&\leq \bigg\|\sum_{i \geq 1}\sum_{j \geq 0}2^{-\frac{\alpha}{2}(i+\frac{3}{4}j)}\bigg(\sum_{k \leq s} \sum_{R \in \mathcal{D}_{k,good}} \mathbf{1}_{R}
\Big(\sum_{\substack{Q \in \mathcal{D}_{k-i}:Q \subset Q^* \\ D(Q,R)/\ell(R) \thicksim 2^j}}  \ell(S)^{-m} \big\| \Delta_Q f \big\|_{L^1(\mu)}\Big)^2
\bigg)^{1/2}\bigg\|_{L^p(\mu)} \\
&\leq \sum_{i \geq 1}\sum_{j \geq 0}2^{-\frac{\alpha}{2}(i+\frac{3}{4}j)} \bigg\|\bigg(\sum_{k \leq s} \sum_{R \in \mathcal{D}_{k,good}} \mathbf{1}_{R}
\Big(\sum_{\substack{Q \in \mathcal{D}_{k-i}:Q \subset Q^* \\ D(Q,R)/\ell(R) \thicksim 2^j}} \ell(S)^{-m} \big\| \Delta_Q f \big\|_{L^1(\mu)}\Big)^2
\bigg)^{1/2}\bigg\|_{L^p(\mu)}.
\end{align*}
Then in order to dominate the sum over $R$, we follow the strategy used in \cite{MM}. For the for completeness of this article, we give the proof. For fixed
$j,k$, denote $\tau_j(k)=k+j+\theta(j)$. From the fact that $Q,R \subset S$ (see \cite[p.~483]{H2014}), it follows that
\begin{align*}
&\sum_{R \in \mathcal{D}_{k,good}} \mathbf{1}_{R} \Big(\sum_{\substack{Q \in \mathcal{D}_{k-i}:Q \subset Q^* \\ D(Q,R)/\ell(R) \thicksim 2^j}} \ell(S)^{-m}
\big\| \Delta_Q f \big\|_{L^1(\mu)}\Big)^2 \\
&=\Big(\sum_{R \in \mathcal{D}_{k,good}} \mathbf{1}_{R}\sum_{\substack{Q \in \mathcal{D}_{k-i}:Q \subset Q^* \\ D(Q,R)/\ell(R) \thicksim 2^j}} 2^{-m \tau_j(k)}
\big\| \Delta_Q f \big\|_{L^1(\mu)}\Big)^2 \\
&\leq \Big(\sum_{S \in \mathcal{D}_{\tau_j(k)}} \sum_{\substack{R \in \mathcal{D}_{k,good} \\ R \subset S}} \mathbf{1}_{R} \sum_{\substack{Q \in
\mathcal{D}_{k-i}:Q \subset Q^* \\ D(Q,R)/\ell(R) \thicksim 2^j}} 2^{-m \tau_j(k)} \big\| \Delta_Q f \big\|_{L^1(\mu)}\Big)^2 \\
\end{align*}
\begin{align*}
&\lesssim \Big(\sum_{S \in \mathcal{D}_{\tau_j(k)}} \sum_{\substack{R \in \mathcal{D}_{k,good} \\ R \subset S}} \mathbf{1}_{R} \frac{1}{\mu(S)} \int_S \big|
\Delta_{k-i} f \big| d\mu \Big)^2 \\
&\leq \Big(\sum_{S \in \mathcal{D}_{\tau_j(k)}} \frac{\mathbf{1}_S}{\mu(S)} \int_S \big| \Delta_{k-i} f \big| d\mu \Big)^2
=\big(\mathbb{E}_{\tau_j(k)}(| \Delta_{k-i} f |)\big)^2.
\end{align*}
Therefore, by Lemma $\ref{Stein}$ and Lemma $\ref{Delta-k}$ we have that
\begin{align*}
\mathcal{G}_{less}
&\lesssim \sum_{i \geq 1}\sum_{j \geq 0}2^{-\frac{\alpha}{2}(i+\frac{3}{4}j)} \Big\| \Big(\sum_{k \leq s}\big(\mathbb{E}_{\tau_j(k)}(| \Delta_{k-i} f |)\big)^2
\Big)^{1/2} \Big\|_{L^p(\mu)} \\
&\lesssim \sum_{i \geq 1}\sum_{j \geq 0}2^{-\frac{\alpha}{2}(i+\frac{3}{4}j)} \Big\| \Big(\sum_{k}| \Delta_{k} f |^2 \Big)^{1/2} \Big\|_{L^p(\mu)}
\lesssim \mu(Q^*)^{1/p}.
\end{align*}
\qed
\subsection{The Case $\ell(Q) \geq \ell(R)$ and $d(Q,R) > \ell(R)^{\gamma} \ell(Q)^{1-\gamma}$.}\label{sec-sep}
\begin{lemma}\label{estimate-2}
Let $0 < \alpha \leq m(\lambda - 2)/2$. Assume that $Q, R \in \mathcal{D}$ are given cubes satisfying $\ell(Q)=2^i \ell(R)$,
$d(Q,R) > \ell(R)^{\gamma} \ell(Q)^{1-\gamma}$ and $D(Q,R)/\ell(Q) \thicksim 2^j $ for $i \geq 0$, $j \geq 0$. If $(x,t) \in W_R$,
$S=Q^{(j+\theta(i+j))}$, then we have that

\begin{equation*}
\bigg( \int_{\Rn} |\theta_t(\Delta_Q f)(y)|^2 \Big(\frac{t}{t+|x-y|}\Big)^{m\lambda}\frac{d\mu(y)}{t^m}\bigg)^{1/2}
\lesssim 2^{-\frac{\alpha}{4}(i+j)} \ell(S)^{-m} \big\| \Delta_Q f \big\|_{L^1(\mu)}.
\end{equation*}
\end{lemma}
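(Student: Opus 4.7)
The plan is to adapt the argument of Lemma \ref{estimate-1}, with the modifications forced by the geometry $\ell(Q) \geq \ell(R)$. Since now $t \approx \ell(R) \leq \ell(Q)$, the Hölder condition cannot be applied around $c_Q$ (it would require $|z-c_Q| < t/2$, which generically fails for $z \in Q$), so I will forgo cancellation and use the size condition $|s_t(y,z)| \lesssim t^\alpha/(t+|y-z|)^{m+\alpha}$ directly. The resulting $t^\alpha \approx \ell(R)^\alpha$ factor replaces the $\ell(Q)^\alpha$ of the previous lemma, which is precisely why the decay in the conclusion weakens from $2^{-\alpha(i+3j/4)}$ to $2^{-\alpha(i+j)/4}$.

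The first step is to verify the algebraic inequality
\[
\frac{\ell(R)^\alpha}{D(Q,R)^{m+\alpha}} \lesssim 2^{-\alpha(i+j)/4}\,\ell(S)^{-m}.
\]
From $D(Q,R) \approx 2^j\ell(Q) = 2^{i+j}\ell(R)$ and $\ell(S) = 2^{j+\theta(i+j)}\ell(Q) \approx 2^{(i+j)/(1-\gamma)}\ell(R)$ (using $\theta(i+j) \approx (i+j)\gamma/(1-\gamma)$), this reduces to $m+\alpha \geq \alpha/4 + m/(1-\gamma)$, equivalently $3\alpha/4 \geq m\gamma/(1-\gamma)$, which is guaranteed by the standing hypothesis $m\gamma/(1-\gamma) \leq \alpha/4$.

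With this in hand, the size condition yields
\[
|\theta_t(\Delta_Q f)(y)| \lesssim \int_Q \frac{t^\alpha}{(t+|y-z|)^{m+\alpha}}\,|\Delta_Q f(z)|\,d\mu(z),
\]
and I split the $y$-integration into $\{|y-x| \leq d(Q,R)/2\}$ and $\{|y-x| > d(Q,R)/2\}$. On the first region, for $z \in Q$ and $x \in R$ the triangle inequality gives $|y-z| \geq |x-z| - |x-y| \gtrsim d(Q,R) \approx D(Q,R)$, hence $|\theta_t(\Delta_Q f)(y)| \lesssim \ell(R)^\alpha D(Q,R)^{-m-\alpha}\,\|\Delta_Q f\|_{L^1(\mu)}$; the standard bound $\int_{\Rn}(t/(t+|x-y|))^{m\lambda}\,d\mu(y) \lesssim t^m$ (which uses $\lambda > 1$ and the upper power bound on $\mu$) then closes this piece.

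On the second region I will imitate the Young/Schur step of Lemma \ref{estimate-1}: factor $(t/(t+|x-y|))^{m\lambda}$ into two equal halves, pull the first out via $(t/(t+|x-y|))^{m\lambda/2} \lesssim (\ell(R)/(\ell(R)+d(Q,R)))^{m\lambda/2}$, and apply the generalised convolution estimate
\[
\Bigl\|\phi * |\Delta_Q f|\Bigr\|_{L^2(\mu)} \lesssim t^{m/2}\,\|\Delta_Q f\|_{L^1(\mu)}, \qquad \phi(w) = \bigl(t/(t+|w|)\bigr)^{m+\alpha},
\]
which follows from Cauchy--Schwarz together with $\|\phi\|_{L^2(\mu)}^2 \lesssim t^m$ (via the upper power bound, since $2(m+\alpha)>m$). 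The hypothesis $\alpha \leq m(\lambda-2)/2$ gives $m\lambda/2-m-\alpha \geq 0$, which collapses the residual $\ell(R)^{m\lambda/2-m-\alpha}/(\ell(R)+d(Q,R))^{m\lambda/2}$ into $D(Q,R)^{-m-\alpha}$ and reproduces the same bound $\ell(R)^\alpha D(Q,R)^{-m-\alpha}\,\|\Delta_Q f\|_{L^1(\mu)}$ as on the first region. Summing the two contributions and invoking step one then yields the claim. The only delicate point is the exponent bookkeeping in the Young step, but this is essentially identical to the corresponding calculation in Lemma \ref{estimate-1} and presents no genuinely new obstacle; the crucial structural input is simply that both $m\lambda/2 \geq m+\alpha$ and $m\gamma/(1-\gamma) \leq \alpha/4$ provide just enough slack to absorb the geometric factors.
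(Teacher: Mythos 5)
There is a genuine gap: you never actually use the separation hypothesis $d(Q,R) > \ell(R)^\gamma\ell(Q)^{1-\gamma}$, and the step where you smuggle past it is wrong. On the first region you write $|y-z| \gtrsim d(Q,R) \approx D(Q,R)$, but in the regime $\ell(Q)\geq\ell(R)$ of this lemma one only has $D(Q,R) = \ell(Q)+\ell(R)+d(Q,R)$, and nothing forbids $d(Q,R) \ll \ell(Q)$, so $d(Q,R)\approx D(Q,R)$ fails. (The approximation $\ell(R)+d(Q,R)\approx D(Q,R)$ that makes Lemma \ref{estimate-1} close does hold there, precisely because $\ell(Q)<\ell(R)$ in that lemma; here it breaks.) What the triangle inequality really gives on region one is $t+|y-z|\gtrsim \ell(R)+d(Q,R)$, hence
\[
|\theta_t(\Delta_Q f)(y)| \lesssim \frac{\ell(R)^\alpha}{(\ell(R)+d(Q,R))^{m+\alpha}}\,\|\Delta_Q f\|_{L^1(\mu)},
\]
and the same quantity, not $\ell(R)^\alpha D(Q,R)^{-m-\alpha}$, appears after the Young/Schur step on region two: the collapse $m\lambda/2\geq m+\alpha$ only removes the excess power of $\ell(R)+d(Q,R)$, it cannot manufacture the larger $D(Q,R)$ in the denominator, since $(\ell(R)+d(Q,R))^{-m-\alpha}\geq D(Q,R)^{-m-\alpha}$ points the wrong way.

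Consequently the algebraic inequality you verify, $\ell(R)^\alpha D(Q,R)^{-m-\alpha}\lesssim 2^{-\alpha(i+j)/4}\ell(S)^{-m}$, is true but is not the one you need; the required estimate is $\ell(R)^\alpha(\ell(R)+d(Q,R))^{-m-\alpha}\lesssim 2^{-\alpha(i+j)/4}\ell(S)^{-m}$, whose left-hand side can be strictly larger. The paper's proof inserts an intermediate inequality
\[
\frac{\ell(R)^\alpha}{(\ell(R)+d(Q,R))^{m+\alpha}} \lesssim \frac{\ell(Q)^{\alpha/2}\ell(R)^{\alpha/2}}{D(Q,R)^{m+\alpha}},
\]
and proving it is exactly where the separation $d(Q,R)>\ell(R)^\gamma\ell(Q)^{1-\gamma}$ and the restriction $\gamma\leq \alpha/(2(m+\alpha))$ enter (split into the cases $\ell(Q)\leq d(Q,R)$ and $\ell(Q)> d(Q,R)$; in the second case one trades $\ell(R)^{\alpha/2}$ for a power of $\ell(Q)$ using the separation). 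Your remaining exponent bookkeeping in step two is fine, but without this bridge the proof does not close; the remark that ``the exponent bookkeeping presents no genuinely new obstacle'' is exactly where the new obstacle lies.
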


\begin{proof}
We first prove that there holds that in this case
\begin{equation}\label{ell-d-D}
\frac{\ell(R)^\alpha}{(\ell(R)+d(Q,R))^{m+\alpha}}
\lesssim 2^{-\frac{\alpha}{4}(i+j)} \ell(S)^{-m}.
\end{equation}
This will be divided into two steps to show. The first step is to gain
\begin{equation}\label{step-1}
\frac{\ell(R)^\alpha}{(\ell(R)+d(Q,R))^{m+\alpha}}
\lesssim \frac{\ell(Q)^{\alpha/2} \ell(R)^{\alpha/2}}{D(Q,R)^{m+\alpha}}.
\end{equation}
Actually, if $\ell(Q) \leq d(Q,R)$, it is obvious that
\begin{align*}
\frac{\ell(R)^\alpha}{(\ell(R)+d(Q,R))^{m+\alpha}}
\lesssim \frac{\ell(R)^\alpha}{D(Q,R)^{m+\alpha}}
\leq \frac{\ell(Q)^{\alpha/2} \ell(R)^{\alpha/2}}{D(Q,R)^{m+\alpha}}.
\end{align*}
If $\ell(Q) > d(Q,R)$, then $D(Q,R) \thickapprox \ell(Q)$. Using $d(Q,R) > \ell(R)^{\gamma} \ell(Q)^{1-\gamma}$ and
$\gamma \leq \frac{\alpha}{2(m+\alpha)}$, we obtain
\begin{align*}
\ell(Q) = \bigg(\frac{\ell(Q)}{\ell(R)}\bigg)^\gamma \ell(R)^\gamma \ell(Q)^{1-\gamma}
< \bigg(\frac{\ell(Q)}{\ell(R)}\bigg)^\gamma d(Q,R),
\end{align*}
and
\begin{align*}
\frac{\ell(R)^\alpha}{(\ell(R)+d(Q,R))^{m+\alpha}}
\leq \frac{\ell(R)^\alpha}{d(Q,R)^{m+\alpha}}
\leq \frac{\ell(Q)^{\alpha/2} \ell(R)^{\alpha/2}}{\ell(Q)^{m+\alpha}}
\thickapprox \frac{\ell(Q)^{\alpha/2} \ell(R)^{\alpha/2}}{D(Q,R)^{m+\alpha}}.
\end{align*}
The second step is to observe that
\begin{equation}\label{step-2}
\frac{\ell(Q)^{\alpha/2} \ell(R)^{\alpha/2}}{D(Q,R)^{m+\alpha}} \lesssim 2^{-\frac{\alpha}{4}(i+j)} \ell(S)^{-m}.
\end{equation}
Indeed, using the definition of $S_0$, we have
$$ \ell(S_0) = 2^{j+\theta(i+j)} \ell(Q) \thickapprox 2^{j+(i+j) \frac{\gamma}{1-\gamma}} \ell(Q).$$
Hence, it follows from $\frac{m\gamma}{1-\gamma} < \frac{\alpha}{4}$ that
\begin{align*}
\frac{\ell(Q)^{\alpha/2} \ell(R)^{\alpha/2}}{D(Q,R)^{m+\alpha}}
\thickapprox 2^{-\frac{\alpha}{2} i} 2^{-(m+\alpha)j}  \ell(Q)^{-m}
\lesssim 2^{-\alpha i} 2^{-\alpha j + \frac{m\gamma}{1-\gamma}(i+j)} \ell(S_0)^{-m}
< 2^{-\frac{\alpha}{4}(i+j)} \ell(S_0)^{-m}.
\end{align*}
Combining the inequalities $(\ref{step-1})$ and $(\ref{step-2})$ gives the desired result $(\ref{ell-d-D})$.

Next, we continue with the proof. Using the size condition, we get
$$|\theta_t(\Delta_Q f)(y)|
\lesssim \int_{\Rn}\frac{t^\alpha}{(t+|y-z|)^{m+\alpha}} |\Delta_Q f(z)|d\mu(z). $$
Applying the similar argument as in Lemma $\ref{estimate-1}$, we have the following estimate
\begin{equation*}
\bigg( \int_{\Rn} |\theta_t(\Delta_Q f)(y)|^2 \Big(\frac{t}{t+|x-y|}\Big)^{m\lambda}\frac{d\mu(y)}{t^m}\bigg)^{1/2}
\lesssim \ 2^{-\frac{\alpha}{4}(i+j)} \ell(S)^{-m} \big\| \Delta_Q f \big\|_{L^1(\mu)}.
\end{equation*}
This shows Lemma $\ref{estimate-2}$.
\end{proof}

Now we turn to the estimate of separated part. We reindex the sum over $R$.
\begin{align*}
\mathcal{G}_{sep}
&=\bigg[\int_{\Rn} \mathbf{1}_{Q_0}(x)\bigg(\sum_{k \leq s} \sum_{\substack{R \in \mathcal{D}_{k,good}\\ 2^{-\zeta} < \ell(R) \leq 2^s}}
\mathbf{1}_{R}(x)\int_{\ell(R)/2}^{\ell(R)} \int_\Rn \Big(\frac{t}{t+|x-y|}\Big)^{m\lambda} \\
&\quad\quad\quad\quad\quad \times \Big| \sum_{i \geq 0}\sum_{j \geq 0}\sum_{\substack{Q \in \mathcal{D}_{k+i}:Q \subset Q^* \\ \ell(Q)>2^{-\zeta} \\
D(Q,R)/\ell(R) \thicksim 2^j \\ d(Q,R) > \ell(R)^{\gamma} \ell(Q)^{1-\gamma}}}\theta_t(\Delta_Q f)(y) \Big|^2 \frac{d\mu(y) dt}{t^{m+1}}\bigg)^{p/2}
d\mu(x)\bigg]^{1/p}.
\end{align*}
Proceeding as we did in the previous subsection, we obtain that
$\mathcal{G}_{sep} \lesssim \mu(Q^*)^{1/p}$.
\subsection{The Case $\ell(R) \leq \ell(Q) \leq 2^r \ell(R)$ and $d(Q,R) \leq \ell(R)^{\gamma} \ell(Q)^{1-\gamma}$.}\label{sec-adj}
In this case, if we define $i,j$ by $\ell(Q) = 2^i \ell(R)$ and $D(Q,R)/\ell(Q) \thicksim 2^j$, then it is trivial that $0 \leq i \leq r$ and $0 \leq j \leq
1$. Moreover, notice that
$$ \frac{\ell(R)^\alpha}{(\ell(R)+d(Q,R))^{m+\alpha}} \leq \ell(R)^{-m} \thickapprox \frac{\ell(Q)^{\alpha/2} \ell(R)^{\alpha/2}}{D(Q,R)^{m+\alpha}} .$$
This  parallels with $(\ref{step-1})$.
A completely analogous calculation to that of the preceding section yields that
$\mathcal{G}_{adj} \lesssim \mu(Q^*)^{1/p}$.
\subsection{The Case $\ell(Q) > 2^r \ell(R)$ and $d(Q,R) \leq \ell(R)^{\gamma} \ell(Q)^{1-\gamma}$.}
In this case, since $R$ is good, it must actually have $R \subset Q$. That is, $Q$ is the ancestor of $R$. Then we can write
\begin{align*}
\mathcal{G}_{nes}
= \bigg\| \mathbf{1}_{Q_0}\bigg(& \sum_{\substack{R \in \mathcal{D}_{good}:R \subset Q^*\\ 2^{-\zeta} < \ell(R) \leq 2^{s-r-1}}}
\mathbf{1}_{R}\int_{\ell(R)/2}^{\ell(R)} \int_\Rn \Big(\frac{t}{t+|x-y|}\Big)^{m\lambda} \\
&\quad \times \Big| \sum_{i=r+1}^{s-\log_2 \ell(R)} \theta_t(\Delta_{R^{(i)}} f)(y)
\Big|^2 \frac{d\mu(y) dt}{t^{m+1}}\bigg)^{1/2}\bigg\|_{L^p(\mu)}.
\end{align*}
If we by $\mathcal{G}_{nes}'$ and $\mathcal{G}_{nes}^{''}$ denote the sum with respect to the terms $\mathbf{1}_{R^{(i)} \setminus R^{(i-1)}}\Delta_{R^{(i)}}
f$ and $\mathbf{1}_{R^{(i-1)}}\Delta_{R^{(i)}} f$, there holds
$$\mathcal{G}_{nes} \leq \mathcal{G}_{nes}' + \mathcal{G}_{nes}^{''}.$$

\subsubsection{\textbf{Bound for} $\mathcal{G}_{nes}'$.}
One begins by dominating $\mathcal{G}_{nes}'$ with
\begin{align*}
\mathcal{G}_{nes}'
\leq \bigg\| \mathbf{1}_{Q_0}\bigg(\sum_{k \leq s} & \sum_{\substack{R \in \mathcal{D}_{k,good}:R \subset Q^*\\ 2^{-\zeta} < \ell(R) \leq 2^{s-r-1}}}
\mathbf{1}_{R}\bigg(\sum_{i=r+1}^{s-\log_2 \ell(R)} \Big(\int_{\ell(R)/2}^{\ell(R)} \int_\Rn \Big(\frac{t}{t+|x-y|}\Big)^{m\lambda}\\ &\quad\quad\quad \times
\Big| \theta_t(\mathbf{1}_{R^{(i)} \setminus R^{(i-1)}}\Delta_{R^{(i)}} f)(y) \Big|^2 \frac{d\mu(y) dt}{t^{m+1}}\Big)^{1/2}\bigg)^2
\bigg)^{1/2}\bigg\|_{L^p(\mu)} \\
\end{align*}
Now we need the following lemma.
\begin{lemma}\label{nested-1}
Let $i \geq r+1$ and $R \in \mathcal{D}_{k,good}$. Then we have for $(x,t) \in W_R$ that
\begin{equation*}
\bigg(\int_{\Rn} |\theta_t(\mathbf{1}_{R^{(i)} \setminus R^{(i-1)}} \Delta_{R^{(i)}} f)(y)|^2  \Big(\frac{t}{t+|x-y|}\Big)^{n\lambda} \frac{
d\mu(y)}{t^{m}}\bigg)^{1/2}
\lesssim 2^{-\frac{\alpha}{2} i} 2^{-(k+i)m} \big\| \Delta_{R^{(i)}} f \big\|_{L^1(\mu)}.
\end{equation*}
\end{lemma}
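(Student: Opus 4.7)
The plan is to exploit the geometric separation forced by the goodness of $R$, then reduce to an $L^2$-type kernel integral via Cauchy--Schwarz and Fubini. Since $R \in \mathcal{D}_{k,good}$ and $\ell(R^{(i-1)}) = 2^{i-1}\ell(R) \geq 2^r\ell(R)$, the goodness of $R$ forces the geometric gap
$$
\dist(R,\partial R^{(i-1)}) > \ell(R)^\gamma \ell(R^{(i-1)})^{1-\gamma} =: D \approx 2^{i(1-\gamma)}\ell(R).
$$
Writing $g := \mathbf{1}_{R^{(i)}\setminus R^{(i-1)}}\Delta_{R^{(i)}}f$, we then have $|x-z| \geq D$ for all $x \in R$ and all $z \in \supp g$; also $t \approx \ell(R)$ since $(x,t) \in W_R$, and $\|g\|_{L^1(\mu)} \leq \|\Delta_{R^{(i)}}f\|_{L^1(\mu)}$ trivially.

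Next I would apply the size condition followed by Cauchy--Schwarz on the $z$-integration to get
$$
|\theta_t g(y)|^2 \leq \|g\|_{L^1(\mu)} \int_{\Rn} \frac{t^{2\alpha}}{(t+|y-z|)^{2(m+\alpha)}} |g(z)|\, d\mu(z).
$$
Integrating this inequality against the Whitney weight and exchanging the order of integration via Fubini reduces the lemma to bounding
$$
K(z) := \int_{\Rn} \frac{t^{2\alpha}}{(t+|y-z|)^{2(m+\alpha)}} \Big(\frac{t}{t+|x-y|}\Big)^{m\lambda}\frac{d\mu(y)}{t^m}
$$
uniformly over $z \in \supp g$. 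For such $z$ one has $|z-x| \geq D$, so I would split the $y$-integration at the threshold $|y-x| = |z-x|/2$. On $\{|y-x| \leq |z-x|/2\}$ the inequality $|y-z| \geq |z-x|/2$ bounds the kernel by $t^{2\alpha}/|z-x|^{2(m+\alpha)}$, while the remaining weight satisfies $\int (t/(t+|x-y|))^{m\lambda}\,d\mu(y)/t^m \lesssim 1$; this contributes $\lesssim \ell(R)^{2\alpha}/D^{2(m+\alpha)}$ to $K(z)$. On the complementary region the weight is at most $(t/|z-x|)^{m\lambda}$, and the remaining kernel integral $\int t^{2\alpha}/(t+|y-z|)^{2(m+\alpha)}\,d\mu(y) \lesssim t^{-m}$ contributes $\lesssim \ell(R)^{m\lambda-2m}/D^{m\lambda}$. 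Substituting $D \approx 2^{i(1-\gamma)}\ell(R)$ then yields $K(z) \lesssim \ell(R)^{-2m}\bigl(2^{-2i(1-\gamma)(m+\alpha)} + 2^{-i(1-\gamma)m\lambda}\bigr)$.

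Taking square roots and using $\ell(R) = 2^k$, the target bound $2^{-i\alpha/2}\,2^{-(k+i)m}\|\Delta_{R^{(i)}}f\|_{L^1(\mu)}$ reduces to checking two exponent inequalities: $(1-\gamma)(m+\alpha) \geq m+\alpha/2$ and $(1-\gamma)m\lambda/2 \geq m+\alpha/2$. The first is precisely the standing smallness assumption $\gamma \leq \alpha/(2(m+\alpha))$ on the goodness parameter. The second rewrites as $(1-\gamma)m\lambda \geq 2m+\alpha$, which, using $\alpha \leq m(\lambda-2)/2$, follows from $\lambda(1-2\gamma) \geq 2$ and is guaranteed once $\gamma$ is taken sufficiently small (consistent with the smallness of $\gamma$ already adopted in the paper). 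The main obstacle to bypass is the temptation to estimate $|\theta_t g(y)|$ by $\|g\|_{L^1(\mu)}$ times a single supremum of the kernel: that naive approach loses the $2^{-im}$ factor because it decouples the $L^1$ size of $g$ from the geometry in the $y$-variable. The Cauchy--Schwarz step precisely restores the right $L^2$-compatible coupling, so that the dyadic split on $|y-x|$ versus $|z-x|/2$ produces both the kernel decay $D^{-2(m+\alpha)}$ and the weight decay $D^{-m\lambda}$ in their respective regimes.
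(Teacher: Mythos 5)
Your argument is correct, and it reaches the paper's estimate by a mildly different technical route built on the same engine. Both proofs start from the same geometric input: since $i\ge r+1$, the goodness of $R$ applied to $J=R^{(i-1)}$ gives $\dist(R,\partial R^{(i-1)})>\ell(R)^{\gamma}\ell(R^{(i-1)})^{1-\gamma}\approx 2^{i(1-\gamma)}\ell(R)$, and both split according to whether $|x-y|$ is small or large compared with $\dist(z,R)\approx|x-z|$. The paper, however, first converts the separation into the pointwise bound $\ell(R)^{\alpha}(\ell(R)+\dist(z,R))^{-m-\alpha}\lesssim 2^{-\alpha i/2}2^{-(k+i)m}$ and, in the far regime, uses $\alpha\le m(\lambda-2)/2$ to dominate the Poisson-type weight by that same kernel in $z$, finishing with Young's inequality; you instead apply Cauchy--Schwarz against $|g|\,d\mu$ and Fubini, reducing everything to the uniform kernel integral $K(z)$, which avoids Young's inequality and keeps the weight decay $(t/|x-z|)^{m\lambda}$ in raw form. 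The price of your far-region bookkeeping is the extra exponent condition $(1-\gamma)m\lambda\ge 2m+\alpha$, which you leave as ``take $\gamma$ small enough''; in fact it already follows from the paper's standing choices, so no new constraint is introduced: since $\alpha\mapsto \alpha/(2(m+\alpha))$ is increasing and $\alpha\le m(\lambda-2)/2$, the assumption $\gamma\le \alpha/(2(m+\alpha))$ forces $\gamma\le(\lambda-2)/(2\lambda)$, i.e.\ $\lambda(1-2\gamma)\ge 2$, which is exactly what your second inequality requires after inserting $\alpha\le m(\lambda-2)/2$; your first inequality $(1-\gamma)(m+\alpha)\ge m+\alpha/2$ is, as you note, literally $\gamma\le\alpha/(2(m+\alpha))$. (The exponent $n\lambda$ in the lemma statement is a typo for $m\lambda$, which both you and the paper's proof treat correctly, and the auxiliary facts you invoke, namely $\int(t/(t+|x-y|))^{m\lambda}\,d\mu(y)\lesssim t^{m}$ and $\int(t+|y-z|)^{-2(m+\alpha)}\,d\mu(y)\lesssim t^{-m-2\alpha}$, are standard consequences of the upper power bound.)
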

Using the above result we see that
\begin{align*}
\mathcal{G}_{nes}'
&\lesssim \bigg\| \mathbf{1}_{Q_0}\bigg(\sum_{k \leq s-r-1}\sum_{R \in \mathcal{D}_{k,good}:R \subset Q^*} \mathbf{1}_{R}\Big(\sum_{i=r+1}^{s-\log_2 \ell(R)}
2^{-\frac{\alpha}{2}i}2^{-(k+i)m} || \Delta_{R^{(i)}} f ||_{L^1(\mu)}\Big)^2 \bigg)^{1/2}\bigg\|_{L^p(\mu)} \\
&\leq \bigg\| \mathbf{1}_{Q_0}\bigg(\sum_{k \leq s-r-1}\Big(\sum_{i=r+1}^{s-\log_2 \ell(R)}  2^{-\frac{\alpha}{2}i} \sum_{R \in \mathcal{D}_{k,good}:R \subset
Q^*} \mathbf{1}_{R} 2^{-(k+i)m} || \Delta_{R^{(i)}} f ||_{L^1(\mu)}\Big)^2 \bigg)^{1/2}\bigg\|_{L^p(\mu)},
\end{align*}
where we used $\sum_k |a_k|^2 \leq (\sum_k |a_k|)^2$. Then discrete Minkowski's inequality yields that
\begin{align*}
\mathcal{G}_{nes}'
&\lesssim \bigg\| \mathbf{1}_{Q_0}\sum_{i=r+1}^{s-\log_2 \ell(R)}  2^{-\frac{\alpha}{2}i} \bigg(\sum_{k \leq s-i}\Big( \sum_{R \in \mathcal{D}_{k,good}:R
\subset Q^*} \mathbf{1}_{R} 2^{-(k+i)m} || \Delta_{R^{(i)}} f ||_{L^1(\mu)}\Big)^2 \bigg)^{1/2}\bigg\|_{L^p(\mu)} \\
&\leq \sum_{i=r+1}^{s-\log_2 \ell(R)}  2^{-\frac{\alpha}{2}i} \bigg\| \mathbf{1}_{Q_0} \bigg(\sum_{k \leq s-i}\Big( \sum_{R \in \mathcal{D}_{k,good}:R \subset
Q^*} \mathbf{1}_{R} 2^{-(k+i)m} || \Delta_{R^{(i)}} f ||_{L^1(\mu)}\Big)^2 \bigg)^{1/2}\bigg\|_{L^p(\mu)}.
\end{align*}
Finally, making using of Lemma $\ref{Stein}$ and Lemma $\ref{Delta-k}$ again, we get that
\begin{equation}\aligned\label{G-nes'}
\mathcal{G}_{nes}'
&\leq \sum_{i=r+1}^{s-\log_2 \ell(R)}  2^{-\frac{\alpha}{2}i} \bigg\| \mathbf{1}_{Q_0} \bigg(\sum_{k \leq s-i}\Big( \sum_{S \in \mathcal{D}_{k+i}:S \subset
Q^*} \frac{\mathbf{1}_{S}}{\mu(S)} || \Delta_{S} f ||_{L^1(\mu)}\Big)^2 \bigg)^{1/2}\bigg\|_{L^p(\mu)} \\
&= \sum_{i=r+1}^{s-\log_2 \ell(R)}  2^{-\frac{\alpha}{2}i} \bigg\| \mathbf{1}_{Q_0} \bigg(\sum_{k \leq s-i}\Big( \sum_{S \in \mathcal{D}_{k+i}}
\frac{\mathbf{1}_{S}}{\mu(S)} \int_S |\Delta_{k+i}f | d\mu)\Big)^2 \bigg)^{1/2}\bigg\|_{L^p(\mu)} \\
&\lesssim  \sum_{i=r+1}^{s-\log_2 \ell(R)}  2^{-\frac{\alpha}{2}i} \Big\| \Big(\sum_{k \leq s}\big(\mathbb{E}_{k}(| \Delta_{k} f |)\big)^2 \Big)^{1/2}
\Big\|_{L^p(\mu)} \\
&\lesssim  \sum_{i=r+1}^{s-\log_2 \ell(R)}  2^{-\frac{\alpha}{2}i} \Big\| \Big(\sum_{k}| \Delta_{k} f |^2 \Big)^{1/2} \Big\|_{L^p(\mu)}
\lesssim \mu(Q^*)^{1/p}.
\endaligned
\end{equation}
So, we are left to prove Lemma $\ref{nested-1}$.

\vspace{0.3cm}
\noindent\textbf{Proof of Lemma $\ref{nested-1}$.}
First, we shall prove, for any $z \in (R^{(i-1)})^c$,
\begin{equation}\label{goodness}
\frac{\ell(R)^\alpha}{(\ell(R)+\dist(z,R))^{m+\alpha}} \lesssim 2^{-\frac{\alpha}{2} i} 2^{-(k+i)m} .
\end{equation}
On the one hand, making use of the fact $i \geq r+1$ and the goodness of $R$ , we have
$$ d(R,(R^{(i-1)})^c) > \ell(R)^{\gamma} \ell(R^{(i-1)})^{1-\gamma}.$$
On the other hand, since $\dist(z,R) \geq \dist(z,R^{(i-1)}) + \dist(R,\partial R^{(i-1)})$, we obtain that
\begin{align*}
\frac{\ell(R)^\alpha}{(\ell(R)+\dist(z,R))^{n+\alpha}}
&= \bigg(\frac{\ell(R)}{\ell(R^{(i-1)})}\bigg)^\alpha \frac{\ell(R^{(i-1)})^\alpha}{(\ell(R) + \dist(z,R))^{m+\alpha}} \\
&\lesssim \bigg( \frac{\ell(R)}{\ell(R^{(i-1)})} \bigg)^{\alpha -(m+\alpha)\gamma} \frac{\ell(R^{(i-1)})^\alpha}{(\ell(R^{(i-1)})+
\dist(z,R^{(i-1)}))^{m+\alpha}}\\
&\lesssim \bigg( \frac{\ell(R)}{\ell(R^{(i-1)})} \bigg)^{\alpha/2} \ell(R^{(i-1)})^{-m}.
\end{align*}
The above argument yields $(\ref{goodness})$.

Secondly, we by $\mathcal{L}$ denote the left hand side of the inequality in Lemma $\ref{nested-1}$. It follows form the size condition that
\begin{align*}
\mathcal{L} & \lesssim \bigg[ \int_{\Rn} \bigg(\int_{R^{(i-1)^c}} \frac{t^\alpha}{(t + |y - z|)^{m + \alpha}} |\Delta_{R^{(i)}}f(z)|d\mu(z) \bigg)^2
\Big(\frac{t}{t + |x-y|}\Big)^{m \lambda_1} \frac{d\mu(y)}{t^m} \bigg]^{1/2} \\
&\leq \bigg[ \int_{\Rn} \bigg(\int_{E_1} \frac{t^\alpha}{(t + |y - z|)^{m + \alpha}} |\Delta_{R^{(i)}}f(z)| d\mu(z) \bigg)^2 \Big(\frac{t}{t + |x-y|}\Big)^{m
\lambda_1} \frac{d\mu(y)}{t^m} \bigg]^{1/2} \\
&\quad + \bigg[ \int_{\Rn} \bigg(\int_{E_2} \frac{t^\alpha}{(t + |y - z|)^{m + \alpha}} |\Delta_{R^{(i)}}f(z)| d\mu(z) \bigg)^2 \Big(\frac{t}{t +
|x-y|}\Big)^{m \lambda_1} \frac{d\mu(y)}{t^m} \bigg]^{1/2} \\
&:= \mathcal{L}_1 + \mathcal{L}_2 ,
\end{align*}
where
\begin{eqnarray*}
   E_1 &=& \big\{z \in (R^{(i-1)})^c; \ 2|x - y| \leq \dist(z,R) \big\}, \\\
   E_2 &=& \big\{z \in (R^{(i-1)})^c; \ 2|x - y| > \dist(z,R) \big\}.
\end{eqnarray*}

If $z \in E_1$, $$ t + |y - z| \geq t + |x - z| - |x - y| \geq \frac12 (\ell(R) + \dist(z,R)).$$
Hence, together with $(\ref{goodness})$, this leads that
\begin{align*}
\mathcal{L}_1
&\lesssim \int_{E_1} \frac{\ell(R)^\alpha}{(\ell(R)+\dist(z,R))^{n+\alpha}} |\Delta_{R^{(i)}}f(z)| d\mu(z) \bigg(\int_{\Rn}
\Big(\frac{t}{t+|x-y|}\Big)^{m\lambda} \frac{d\mu(y)}{t^{m}}\bigg)^{1/2} \\
&\lesssim 2^{-\frac{\alpha}{2} i} 2^{-(k+i)m} \int_{(R^{(i-1)})^c}|\Delta_{R^{(i)}}f(z)| d\mu(z) .
\end{align*}

If $z \in E_2$, the inequality $(\ref{goodness})$ and Young's inequality imply that

\begin{align*}
\mathcal{L}_2
&\lesssim \bigg[ \int_{\Rn} \bigg(\int_{E_2} \Big(\frac{t}{t + |y - z|}\Big)^{m + \alpha}
\frac{\ell(R)^{\frac{m\lambda}{2}-m}}{(\ell(R)+\dist(z,R))^{\frac{m\lambda}{2}}} |\Delta_{R^{(i)}}f(z)| d\mu(z) \bigg)^2 \frac{d\mu(y)}{t^m} \bigg]^{1/2}\\
&\leq \bigg[ \int_{\Rn} \bigg(\int_{E_2} \Big(\frac{t}{t + |y - z|}\Big)^{m + \alpha} \frac{\ell(R)^{\alpha}}{(\ell(R)+\dist(z,R))^{m+\alpha}}
|\Delta_{R^{(i)}}f(z)| d\mu(z) \bigg)^2 \frac{d\mu(y)}{t^m} \bigg]^{1/2}\\
&\lesssim \bigg[ \int_{\Rn} \bigg(\int_{E_2} \Big(\frac{t}{t + |y - z|}\Big)^{m + \alpha} 2^{-\frac{\alpha}{2} i} 2^{-(k+i)m} |\Delta_{R^{(i)}}f(z)| d\mu(z)
\bigg)^2 \frac{d\mu(y)}{t^m} \bigg]^{1/2}\\
&\lesssim 2^{-\frac{\alpha}{2} i} 2^{-(k+i)m} \int_{(R^{(i-1)})^c}|\Delta_{R^{(i)}}f(z)| d\mu(z) .
\end{align*}
Consequently, Lemma $\ref{nested-1}$ is concluded from the above estimates.
\qed
\subsubsection{\textbf{Bound for} $\mathcal{G}_{nes}^{''}$.}
If $R^{(i-1)}$ is a stopping cube, then we get $(R^{(i-1)})^a=R^{(i-1)}$ and the decomposition
\begin{align*}
\mathbf{1}_{R^{(i-1)}} \Delta_{R^{(i)}}f
=\mathbf{1}_{(R^{(i-1)})^c} \frac{\langle f \rangle_{R^{(i)}}}{\langle b_{(R^{(i)})^a} \rangle_{R^{(i)}}} b_{(R^{(i)})^a}
+ \Big(\frac{\langle f \rangle_{R^{(i-1)}}}{\langle b_{R^{(i-1)}}\rangle_{R^{(i-1)}}} b_{R^{(i-1)}} -
\frac{\langle f \rangle_{R^{(i)}}}{\langle b_{(R^{(i)})^a} \rangle_{R^{(i)}}} b_{(R^{(i)})^a}\Big).
\end{align*}
If $R^{(i-1)}$ is not a stopping cube, then there holds that $(R^{(i-1)})^a=(R^{(i)})^a$ and
$$ \mathbf{1}_{R^{(i-1)}} \Delta_{R^{(i)}}f
= - \mathbf{1}_{(R^{(i-1)})^c} B_{R^{(i-1)}} b_{(R^{(i)})^a} + B_{R^{(i-1)}} b_{(R^{(i)})^a},$$
where
$$ B_{R^{(i-1)}}
=\begin{cases}
\frac{\langle f \rangle_{R^{(i-1)}}}{\langle b_{(R^{(i-1)})^a}\rangle_{R^{(i-1)}}}
 - \frac{\langle f \rangle_{R^{(i)}}}{\langle b_{(R^{(i)})^a} \rangle_{R^{(i)}}}, \ & \ell(R^{(i)}) < 2^s, \\
\frac{\langle f \rangle_{R^{(i-1)}}}{\langle b_{(R^{(i-1)})^a}\rangle_{R^{(i-1)}}} ,   \ & \ell(R^{(i)})=2^s.
\end{cases}
$$
Collecting the two aspects, we can make the following decomposition
\begin{align*}
\sum_{i=r+1}^{s-\log_2 \ell(R)} \mathbf{1}_{R^{(i-1)}}\Delta_{R^{(i)}}f
&= \frac{\langle f \rangle_{R^{(r)}}}{\langle b_{(R^{(r)})^a} \rangle_{R^{(r)}}} b_{(R^{(r)})^a}
- \sum_{i=r+1}^{s-\log_2 \ell(R)} \sum_{R:R^{(i-1)} \notin \mathcal{F}_{Q^*}} \mathbf{1}_{(R^{(i-1)})^c}B_{R^{(i-1)}} b_{(R^{(i)})^a} \\
&\quad + \sum_{i=r+1}^{s-\log_2 \ell(R)} \sum_{R:R^{(i-1)} \in\mathcal{F}_{Q^*}} \mathbf{1}_{(R^{(i-1)})^c} \frac{\langle f
\rangle_{R^{(i)}}}{\langle b_{(R^{(i)})^a} \rangle_{R^{(i)}}} b_{(R^{(i)})^a} .
\end{align*}
Thus, if we set $\mathcal{G}_{nes,par}^{''}$, $\mathcal{G}_{nes,out}^{''}$ and $\mathcal{G}_{nes,in}^{''}$ to be the three terms of the left hand side of the
above equality correspondingly, then
$$ \mathcal{G}_{nes}^{''} \lesssim \mathcal{G}_{nes,par}^{''} + \mathcal{G}_{nes,out}^{''} + \mathcal{G}_{nes,in}^{''} .$$
\vspace{0.3cm}
\noindent\textbf{$\bullet$ The paraproduct estimate.}
By the condition $(2)$ in Theorem $\ref{Theorem}$, the stopping time condition $(1)$ and the fact $|f| \leq 1$, we have
$$ \bigg|\frac{\langle f \rangle_{R^{(r)}}}{\langle b_{(R^{(r)})^a} \rangle_{R^{(r)}}}\bigg| \lesssim 1.$$
Then $\mathcal{G}_{nes,par}^{''}$ can be controlled by
\begin{equation*}\aligned
{}&\bigg\| \mathbf{1}_{Q_0}\bigg(\sum_{\substack{R \in \mathcal{D}_{good}:R \subset Q^* \\ 2^{\zeta} < \ell(R) \leq 2^{s-r-1}}}
\mathbf{1}_{R}\int_{\ell(R)/2}^{\ell(R)} \int_\Rn \Big(\frac{t}{t+|x-y|}\Big)^{m\lambda} |\theta_t(b_{(R^{(r)})^a})(y)|^2 \frac{d\mu(y) dt}{t^{m+1}}
\bigg)^{1/2}\bigg\|_{L^p(\mu)} \\
&\leq \bigg\| \mathbf{1}_{Q_0}\bigg(\sum_{\substack{S \in \mathcal{D} \\ S \subset Q^*}}\sum_{\substack{R \in \mathcal{D}_{good} \\ R^{(r)}=S}}
\mathbf{1}_{R}\int_{\ell(R)/2}^{\ell(R)} \int_\Rn \Big(\frac{t}{t+|x-y|}\Big)^{m\lambda} | \theta_t(b_{S^a})(y) |^2 \frac{d\mu(y) dt}{t^{m+1}}
\bigg)^{1/2}\bigg\|_{L^p(\mu)}.
\endaligned
\end{equation*}
We are going to continue to reindex the sum above in the following way.
\begin{align*}
{}&\bigg\| \mathbf{1}_{Q_0}\bigg(\sum_{F \in \mathcal{F}_{Q^*}}\sum_{S:S^a=F}\sum_{\substack{R \in \mathcal{D} \\ R^{(r)}=S}}
\mathbf{1}_{R}\int_{\ell(R)/2}^{\ell(R)} \int_\Rn \Big(\frac{t}{t+|x-y|}\Big)^{m\lambda} | \theta_t(b_F)(y) |^2 \frac{d\mu(y) dt}{t^{m+1}}
\bigg)^{1/2}\bigg\|_{L^p(\mu)} \\
&\leq \bigg\| \mathbf{1}_{Q_0}\bigg(\sum_{F \in \mathcal{F}_{Q^*}}\sum_{\substack{R \in \mathcal{D} \\ R \subset F}} \mathbf{1}_{R}\int_{\ell(R)/2}^{\ell(R)}
\int_\Rn \Big(\frac{t}{t+|x-y|}\Big)^{m\lambda} | \theta_t(b_F)(y) |^2 \frac{d\mu(y) dt}{t^{m+1}} \bigg)^{1/2}\bigg\|_{L^p(\mu)} \\
&\leq \bigg\| \mathbf{1}_{Q_0}\bigg(\sum_{F \in \mathcal{F}_{Q^*}} \mathbf{1}_{F}\int_{0}^{\ell(F)} \int_\Rn \Big(\frac{t}{t+|x-y|}\Big)^{m\lambda} |
\theta_t(b_F)(y) |^2 \frac{d\mu(y) dt}{t^{m+1}} \bigg)^{1/2}\bigg\|_{L^p(\mu)} .
\end{align*}
Here we need a fundamental inequality. That is, for any nonnegative function sequence $\{ a_{ij}\}$, it holds that
$||(\sum_i \sum_j a_{ij})^{1/2}||_{L^p} \leq \sum_{j}(\sum_{i}||a_{ij}^{1/2}||_{L^p})^{1/p}$,
$1 \leq p \leq 2$.
This gives us that
\begin{align*}
\mathcal{G}_{nes,par}^{''}
&\lesssim \sum_{j \geq 0} \bigg(\sum_{F \in \mathcal{F}^j_{Q^*}} \bigg\| \Big( \mathbf{1}_{F}\int_{0}^{\ell(F)} \int_\Rn \Big(\frac{t}{t+|x-y|}\Big)^{m\lambda}
|\theta_t(b_F)(y)|^2 \frac{d\mu(y) dt}{t^{m+1}} \Big)^{1/2}\bigg\|_{L^p(\mu)}^p \bigg)^{1/p} .
\end{align*}
By testing condition and Lemma $\ref{F-Fj-Q*}$, we obtain
\begin{align*}
\mathcal{G}_{nes,par}^{''}
\leq \mathcal{G} \sum_{j \geq 0} \Big( \sum_{F \in \mathcal{F}^j_{Q^*}} \mu(F) \Big)^{1/p}
\lesssim \mathcal{G} \mu(Q^*)^{1/p}.
\end{align*}
\vspace{0.3cm}
\noindent\textbf{$\bullet$ The non-stopping term.}
In this case, it is important to gain a geometric decay in $i$. Moreover, the goodness is of the essence.

\begin{lemma}\label{nested-2}
Let $0 < \alpha \leq m(\lambda - 2)/2$. Given a cube $R \in \mathcal{D}_{good}$ and $i \geq r+1$, there holds for any $(x,t) \in W_R$
\begin{equation*}
\mathcal{N}(x) := \bigg(\int_{\Rn} |\theta_t(\mathbf{1}_{(R^{(i-1)})^c} b_{(R^{(i)})^a})(y)|^2 \Big(\frac{t}{t+|x-y|}\Big)^{m\lambda}
\frac{d\mu(y)}{t^m} \bigg)^{1/2}
\lesssim 2^{- \alpha i/2}.
\end{equation*}
\end{lemma}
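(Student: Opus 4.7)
\textbf{Plan for Lemma \ref{nested-2}.}

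The strategy is to mirror the proof of Lemma \ref{nested-1}, replacing $\Delta_{R^{(i)}}f$ by $b_{(R^{(i)})^a}$ throughout and exploiting the very same goodness estimate \eqref{goodness} that was established there: for every $z \in (R^{(i-1)})^c$,
\[
\frac{\ell(R)^{\alpha}}{(\ell(R)+\dist(z,R))^{m+\alpha}} \lesssim 2^{-\alpha i/2}\,\ell(R^{(i)})^{-m}.
\]
First I would invoke the size condition on $s_t$ to obtain the pointwise domination
\[
|\theta_t(\mathbf{1}_{(R^{(i-1)})^c}b_{(R^{(i)})^a})(y)| \lesssim \int_{(R^{(i-1)})^c} \frac{t^{\alpha}}{(t+|y-z|)^{m+\alpha}}\,|b_{(R^{(i)})^a}(z)|\,d\mu(z),
\]
and then split the $z$-domain (for each fixed $y$) exactly as in Lemma \ref{nested-1}, via $E_1(y) = \{z \in (R^{(i-1)})^c : 2|x-y| \leq \dist(z,R)\}$ and $E_2(y) = (R^{(i-1)})^c \setminus E_1(y)$, so that $\mathcal{N}(x) \leq \mathcal{N}_1(x) + \mathcal{N}_2(x)$.

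For $\mathcal{N}_1$, on $E_1(y)$ the triangle inequality and $t \approx \ell(R)$ give $t+|y-z| \gtrsim \ell(R)+\dist(z,R)$; plugging in \eqref{goodness} and integrating out the remaining Poisson factor $\int_{\Rn}(t/(t+|x-y|))^{m\lambda}\,d\mu(y)/t^m \lesssim 1$ (via the upper power bound and $\lambda>1$) handles this piece. For $\mathcal{N}_2$, on $E_2(y)$ we have $t+|x-y| \gtrsim \ell(R)+\dist(z,R)$, hence the pointwise-in-$z$ inequality
\[
\Big(\frac{t}{t+|x-y|}\Big)^{m\lambda/2} \lesssim \Big(\frac{\ell(R)}{\ell(R)+\dist(z,R)}\Big)^{m\lambda/2}, \qquad z \in E_2(y).
\]
Squaring $\mathcal{N}_2(x)$ allows pulling this $z$-dependent factor inside the $z$-integral; then Minkowski's integral inequality separates out the $y$-integration, which evaluates (by the upper power bound) to a constant multiple of $t^m$. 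The restriction $\alpha \leq m(\lambda-2)/2$ is used precisely at this step to bound $\ell(R)^{m\lambda/2-m}/(\ell(R)+\dist(z,R))^{m\lambda/2}$ by $\ell(R)^{\alpha}/(\ell(R)+\dist(z,R))^{m+\alpha}$, bringing us back into the form of \eqref{goodness}.

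Combining $\mathcal{N}_1$ and $\mathcal{N}_2$ produces
\[
\mathcal{N}(x) \lesssim 2^{-\alpha i/2}\,\ell(R^{(i)})^{-m}\,\|b_{(R^{(i)})^a}\|_{L^1(\mu)},
\]
and the desired bound $\mathcal{N}(x) \lesssim 2^{-\alpha i/2}$ follows from H\"older's inequality together with the normalization $\|b_{(R^{(i)})^a}\|_{L^p(\mu)}^{p} \lesssim \mu((R^{(i)})^a)$, the upper power bound $\mu((R^{(i)})^a)\lesssim\ell((R^{(i)})^a)^m$, and the size control of $(R^{(i)})^a$ relative to $R^{(i)}$ built into the stopping-cube construction.

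The main obstacle will be the $E_2$-piece, where one must trade the Poisson-type decay $(t/(t+|x-y|))^{m\lambda}$ for the goodness-compatible decay controlled by $\alpha$; the hypothesis $\alpha\leq m(\lambda-2)/2$ is sharp exactly for this exchange. Once the $\mathcal{N}_2$ estimate is in hand, everything else is a cosmetic variation on Lemma \ref{nested-1}.
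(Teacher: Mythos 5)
Your overall skeleton (size condition, split into $E_1$ and $E_2$, Young's inequality, and the use of $\alpha\leq m(\lambda-2)/2$ to trade the factor $(t/(t+|x-y|))^{m\lambda/2}$ for $\alpha$-decay) is the same as the paper's, but there is a genuine gap in the final step. Reducing everything to the uniform bound $\frac{\ell(R)^{\alpha}}{(\ell(R)+\dist(z,R))^{m+\alpha}}\lesssim 2^{-\alpha i/2}\ell(R^{(i)})^{-m}$ and then multiplying by $\|b_{(R^{(i)})^a}\|_{L^1(\mu)}$ cannot give $2^{-\alpha i/2}$: by H\"older and hypothesis (3) of the Theorem one only gets $\|b_{(R^{(i)})^a}\|_{L^1(\mu)}\lesssim \mu\big((R^{(i)})^a\big)\lesssim \ell\big((R^{(i)})^a\big)^m$, so your bound is $2^{-\alpha i/2}\,\big(\ell((R^{(i)})^a)/\ell(R^{(i)})\big)^m$. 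The "size control of $(R^{(i)})^a$ relative to $R^{(i)}$" that you invoke does not exist: the stopping ancestor $(R^{(i)})^a$ is the minimal cube of $\mathcal{F}_{Q^*}$ containing $R^{(i)}$, and the construction puts no bound on how many generations separate a cube from its stopping ancestor, so the ratio $\ell((R^{(i)})^a)^m/\ell(R^{(i)})^m$ can be arbitrarily large. This is precisely where Lemma \ref{nested-2} differs from Lemma \ref{nested-1}, in which the function $\Delta_{R^{(i)}}f$ lives on $R^{(i)}$ and the $L^1$ norm is the right quantity; here $b_{(R^{(i)})^a}$ is spread over the possibly much larger cube $(R^{(i)})^a$.

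The paper's remedy, which is the missing idea, is to keep the decay in $z$ rather than freeze it at its worst value: writing $(R^{(i)})^a=R^{(i+N_0)}$, decompose $(R^{(i-1)})^c\cap\supp b_{(R^{(i)})^a}=\bigcup_{j=0}^{N_0}\big(R^{(i+j)}\setminus R^{(i+j-1)}\big)$. On the $j$-th annulus, goodness of $R$ together with $\gamma\leq\frac{\alpha}{2(m+\alpha)}$ and the upper power bound gives the improved estimate $\frac{\ell(R)^{\alpha}}{|z-x|^{m+\alpha}}\lesssim \frac{2^{-(i+j)\alpha/2}}{\mu(R^{(i+j)})}$, while the stopping construction controls the mass of $b_{(R^{(i)})^a}$ on each annulus: for $j<N_0$ the cube $R^{(i+j)}$ is not a stopping cube, so the negation of stopping condition (2) plus H\"older yields $\int_{R^{(i+j)}}|b_{(R^{(i)})^a}|\,d\mu\lesssim\mu(R^{(i+j)})$, and for $j=N_0$ the same follows from hypothesis (3) of the Theorem applied to $(R^{(i)})^a$ itself. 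Summing the resulting bounds $2^{-(i+j)\alpha/2}$ over $j\geq 0$ gives $\int_{(R^{(i-1)})^c}\frac{\ell(R)^{\alpha}}{|z-x|^{m+\alpha}}|b_{(R^{(i)})^a}|\,d\mu(z)\lesssim 2^{-\alpha i/2}$, after which your $E_1$/$E_2$ analysis goes through verbatim. Without this annulus decomposition and the use of the stopping-time normalization of $b_{(R^{(i)})^a}$ on intermediate (non-stopping) cubes, the argument does not close.
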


\begin{proof}
Set $N_0$ to satisfy $(R^{(i)})^a = R^{(i+N_0)}$. Since $i > r$, we by the goodness of $R$ have that for $0 \leq j \leq N_0$
\begin{align*}
&d(R, \partial R^{(i+j-1)})^{m+\alpha}
> 2^{(i+j-1)(1-\gamma)(m+\alpha)} \ell(R)^{m+\alpha} \\
&\gtrsim 2^{(i+j)(m+\alpha)/2} \ell(R)^{m+\alpha}
\gtrsim 2^{(i+j)\alpha/2}  \ell(R)^{\alpha} \mu(R^{(i+j)}).
\end{align*}
Thus combining with the stopping conditions we see that
\begin{align*}
\int_{(R^{(i-1)})^c} \frac{\ell(R)^\alpha}{|z - x|^{m+\alpha}} |b_{(R^{(i)})^a}| d\mu(z)
&= \sum_{j=0}^{N_0} \int_{R^{(i+j)} \setminus R^{(i+j-1)}} \frac{\ell(R)^\alpha}{|z - x|^{m+\alpha}} |b_{(R^{(i)})^a}| d\mu(z) \\
&\lesssim \sum_{j=0}^{N_0} \frac{\ell(R)^\alpha \mu(R^{(i+j)})}{ 2^{(i+j)\alpha/2} \mu(R^{(i+j)})} \lesssim 2^{-\alpha i/2}.
\end{align*}
We have used the estimate $ \int_{R^{(i+j)}} |b_{(R^{(i)})^a}| d\mu \lesssim \mu(R^{(i+j)})$. Actually, we only need to note that
for any $j < N_0$, $R^{(i+j)}$ is a non-stopping cube. Thus, the stopping time condition $(2)$ gives the desired result. For $j=N_0$, the condition $(3)$ in
Theorem $\ref{Theorem}$ will be used to get that. Together with the size condition, we split
\begin{align*}
\mathcal{N}(x) & \lesssim \bigg[ \int_{\Rn} \bigg(\int_{(R^{(i)})^a \setminus R^{(i-1)}} \frac{t^\alpha}{(t + |y - z|)^{m + \alpha}} |b_{(R^{(i)})^a}| d\mu(z)
\bigg)^2 \Big(\frac{t}{t + |x-y|}\Big)^{m \lambda_1} \frac{d\mu(y)}{t^m} \bigg]^{1/2} \\
&\leq \bigg[ \int_{\Rn} \bigg(\int_{E_1} \frac{t^\alpha}{(t + |y - z|)^{m + \alpha}} |b_{(R^{(i)})^a}| d\mu(z) \bigg)^2 \Big(\frac{t}{t + |x-y|}\Big)^{m
\lambda_1} \frac{d\mu(y)}{t^m} \bigg]^{1/2} \\
&\quad + \bigg[ \int_{\Rn} \bigg(\int_{E_2} \frac{t^\alpha}{(t + |y - z|)^{m + \alpha}} |b_{(R^{(i)})^a}| d\mu(z) \bigg)^2 \Big(\frac{t}{t + |x-y|}\Big)^{m
\lambda_1} \frac{d\mu(y)}{t^m} \bigg]^{1/2} \\
&:= \mathcal{N}_1(x) + \mathcal{N}_2(x) ,
\end{align*}
where
$$ E_1 = \big\{z \in (R^{(i)})^a \setminus R^{(i-1)}; |z - x| \geq 2|x - y| \big\}, \
   E_2 = \big\{z \in (R^{(i)})^a \setminus R^{(i-1)}; |z - x| < 2|x - y| \big\}.$$

When $z \in E_1$, $$ t + |y - z| > |x - z| - |x - y| \geq \frac12 |x - z|.$$
So, we obtain
$$ \mathcal{N}_1(x)
\lesssim \int_{(R^{(k-1)})^c} \frac{\ell(R)^\alpha}{|z - x|^{m+\alpha}} |b_{(R^{(i)})^a}| d\mu(z) \cdot
\bigg[ \int_{\Rn} \Big(\frac{t}{t + |x-y|}\Big)^{m \lambda} \frac{d\mu(y)}{t^m} \bigg]^{1/2}
\lesssim 2^{-\alpha i/2}.$$
As for $\mathcal{N}_2(x) $, by Young's inequality we have the following estimate
\begin{align*}
\mathcal{N}_2(x)
&\lesssim \bigg[ \int_{\Rn} \bigg(\int_{E_2} \Big(\frac{t}{(t+|y-z|)}\Big)^{m+\alpha} \frac{t^{\frac{m \lambda}{2}-m}}{(t + |x - z|)^{\frac{m \alpha}{2}}}
|b_{(R^{(i)})^a}| d\mu(z) \bigg)^2 \frac{d\mu(y)}{t^m} \bigg]^{1/2} \\
&\lesssim \bigg[ \int_{\Rn} \bigg(\int_{E_2} \Big(\frac{t}{(t+|y-z|)}\Big)^{m+\alpha} \frac{t^\alpha}{(t + |x - z|)^{m+\alpha}} |b_{(R^{(i)})^a}| d\mu(z)
\bigg)^2 \frac{d\mu(y)}{t^m} \bigg]^{1/2} \\
&=t^{-m/2} \big\| \xi * \eta \big\|_{L^2(\mu)}
\leq t^{-m/2} \big\| \xi \big\|_{L^2(\mu)} \big\| \eta \big\|_{L^1(\mu)} \\
&\lesssim \int_{(R^{(k-1)})^c} \frac{\ell(R)^\alpha}{|z - x|^{m+\alpha}} |b_{(R^{(i)})^a}(z)| d\mu(z)
\lesssim 2^{-\alpha i/2},
\end{align*}
where
$$ \xi(z) = \Big[\frac{t}{(t + |z|)}\Big]^{m + \alpha}, \ \
\eta(z) = \frac{t^\alpha}{(t + |z-x|)^{m + \alpha}}\mathbf{1}_{E_2}(z)|b_{(R^{(i)})^a}(z)|.$$
\end{proof}

Next, we turn our attention to control the non-stopping term. Note that $R^{(i-1)}$ does not satisfy the stopping time condition $(1)$ and $(R^{(i-1)})^a =
(R^{(i)})^a$. Thus, we have
$$ |B_{R^{(i-1)}}| \lesssim \frac{1}{\mu(R^{(i-1)})} \Big|\int_{R^{(i-1)}} B_{R^{(i-1)}} b_{(R^{(i)})^a} d\mu \Big|
= \big| \langle \Delta_{R^{(i)}}f \rangle_{R^{(i-1)}} \big|.$$
According to Lemma $\ref{nested-2}$, we get
$$\bigg(\int_{\Rn} |\theta_t(\mathbf{1}_{(R^{(i-1)})^c} B_{R^{(i-1)}} b_{(R^{(i)})^a})(y)|^2 \Big(\frac{t}{t+|x-y|}\Big)^{m\lambda}
\frac{d\mu(y)}{t^m} \bigg)^{1/2}
\lesssim 2^{- \alpha i/2} \big| \langle \Delta_{R^{(i)}}f \rangle_{R^{(i-1)}} \big|.$$
Using the similar method that how we deduce the first inequality in $(\ref{G-nes'})$, we get
\begin{align*}
\mathcal{G}_{nes,out}^{''}
&\lesssim \sum_{i=r+1}^{s-\log_2 \ell(R)}  2^{-\frac{\alpha}{2}i} \bigg\| \mathbf{1}_{Q_0} \bigg(\sum_{k \leq s-i}\Big( \sum_{\substack{R \in
\mathcal{D}_{k,good} \\ R \subset Q^*}} \mathbf{1}_{R} \big| \langle \Delta_{R^{(i)}}f \rangle_{R^{(i-1)}} \big| \Big)^2 \bigg)^{1/2}\bigg\|_{L^p(\mu)} \\
&\leq \sum_{i=r+1}^{s-\log_2 \ell(R)}  2^{-\frac{\alpha}{2}i} \bigg\| \mathbf{1}_{Q_0} \bigg(\sum_{k \leq s-i}\Big( \sum_{S \in \mathcal{D}_{k+i-1}}
\frac{\mathbf{1}_{S}}{\mu(S)} \int_{S} |\Delta_{k+i}f(z) | d\mu(z) \Big)^2 \bigg)^{1/2}\bigg\|_{L^p(\mu)}.
\end{align*}
Because the rest of steps are still the same, we get $\mathcal{G}_{nes,out}^{''} \lesssim \mu(Q^*)^{1/p}$.

\vspace{0.3cm}
\noindent\textbf{$\bullet$ The stopping bound.}
Recall the construction of stopping cubes. From Lemma $\ref{nested-2}$, it follows that
$$\bigg(\int_{\Rn} \Big|\theta_t \Big(\mathbf{1}_{(R^{(i-1)})^c} \frac{\langle f \rangle_{R^{(i)}}}{\langle b_{(R^{(i)})^a} \rangle_{R^{(i)}}}
b_{(R^{(i)})^a}\Big)(y)\Big|^2 \Big(\frac{t}{t+|x-y|}\Big)^{m\lambda} \frac{d\mu(y)}{t^m} \bigg)^{1/2}
\lesssim 2^{- \alpha i/2} |\langle f \rangle_{R^{(i)}}|.$$
This indicates that
\begin{align*}
\mathcal{G}_{nes,in}^{''}
&\lesssim \bigg\| \mathbf{1}_{Q_0}\bigg(\sum_{\substack{R \in \mathcal{D}_{good}:R \subset Q^* \\ 2^{\zeta} < \ell(R) \leq 2^{s-r-1}}}
\mathbf{1}_{R}\Big(\sum_{i=r+1}^{s-\log_2 \ell(R)}\sum_{R:R^{(i-1)} \in \mathcal{F}_{Q^*}} 2^{- \alpha i/2} |\langle f \rangle_{R^{(i)}}| \Big)^2
\bigg)^{1/2}\bigg\|_{L^p(\mu)} \\
&\lesssim \bigg\| \mathbf{1}_{Q_0}\bigg(\sum_{i=r+1}^{s-\log_2 \ell(R)} 2^{- \alpha i/2} \sum_{S \in \mathcal{D}: S \subset Q^*}
|\langle f \rangle_{S}|^2 A_S \bigg)^{1/2}\bigg\|_{L^p(\mu)} \\
&\lesssim \Big\| \mathbf{1}_{Q_0}\Big( \sum_{S \in \mathcal{D}: S \subset Q^*} A_S \Big)^{1/2}\Big\|_{L^p(\mu)}
\end{align*}
where $$ A_S(x) = \sum_{\substack{S' \in ch(S) \\ (S')^a=S'}} \mathbf{1}_{S'}(x).$$
Furthermore, applying H\"{o}lder's inequality and Lemma $\ref{Carleson-sequence}$, we obtain
\begin{align*}
\mathcal{G}_{nes,in}^{''}
&\lesssim \Big\| \mathbf{1}_{Q^*} \Big( \sum_{S \in \mathcal{D}: S \subset Q^*} A_S \Big)^{p/2}\Big\|_{L^1(\mu)}^{1/p}
\lesssim \Big(\mu(Q^*)^{1/(2/p)'} \Big\| \sum_{S \in \mathcal{D}: S \subset Q^*} A_S \Big\|_{L^1(\mu)}^{p/2}\Big)^{1/p} \\
&\leq \bigg( \mu(Q^*)^{1-p/2}\Big( \sum_{S \in \mathcal{D}: S \subset Q^*} \sum_{\substack{S' \in ch(S) \\ (S')^a=S'}}\mu(S') \Big)^{p/2} \bigg)^{1/p} \\
&\lesssim \big( \mu(Q^*)^{1-p/2} \mu(Q^*)^{p/2} \big)^{1/p}
= \mu(Q^*)^{1/p}.
\end{align*}

So far, we have proved the inequality $(\ref{key-2})$ completely.
\qed

\section{Proof of $T1$ theorem $(\ref{key-1})$}\label{sec-T1}
In this section, our goal is to prove the inequality $(\ref{key-1})$. That is,
\begin{equation}\label{T1 theorem}
||g_{\lambda}^*(f)||_{L^p(\mu)} \lesssim (1+\mathcal{G}_{\text{glo}}(9))||f||_{L^p(\mu)}.
\end{equation}
It is worth mentioning that the proof of $T1$ theorem is partly similar to that of $(\ref{key-2})$. Moreover, martingale difference operators are much simpler
than before. So we only give some points different and key steps.
\subsection{Reduction to good cubes}
Without loss of generality, we may assume that $||g_{\lambda}^*||_{L^p(\mu) \rightarrow L^p(\mu)} < \infty$. Since
\begin{align*}
&\iint_{\R^{n+1}_+} \Big(\frac{t}{t+|x-y|}\Big)^{m\lambda} |\theta_t f(y)|^2 \frac{d\mu(y) dt}{t^{m+1}} \\
&=\lim_{s \rightarrow \infty}\sum_{\substack{R \in \mathcal{D}^\beta \\ \ell(R) \leq 2^s}} \mathbf{1}_{R}(x)\int_{\ell(R)/2}^{\ell(R)} \int_\Rn
\Big(\frac{t}{t+|x-y|}\Big)^{m\lambda} |\theta_t f(y)|^2 \frac{d\mu(y) dt}{t^{m+1}},
\end{align*}
the monotone convergence theorem implies that
\begin{align*}
&||g_{\lambda}^*(f)||_{L^p(\mu)} \\
&= \lim_{s \rightarrow \infty} \bigg[\int_{\Rn}\bigg(\sum_{\substack{R \in \mathcal{D}^\beta \\ \ell(R)\leq 2^s}} \mathbf{1}_{R}(x)\int_{\ell(R)/2}^{\ell(R)}
\int_\Rn \Big(\frac{t}{t+|x-y|}\Big)^{m\lambda} |\theta_t f(y)|^2 \frac{d\mu(y) dt}{t^{m+1}}\bigg)^{p/2} d\mu(x)\bigg]^{1/p}\\
&=\lim_{s \rightarrow \infty} \mathbb{E}_{\beta}\bigg[\int_{\Rn}\bigg(\sum_{\substack{R \in \mathcal{D}^\beta \\ \ell(R)\leq 2^s}}
\mathbf{1}_{R}(x)\int_{\ell(R)/2}^{\ell(R)} \int_\Rn \Big(\frac{t}{t+|x-y|}\Big)^{m\lambda} |\theta_t f(y)|^2 \frac{d\mu(y) dt}{t^{m+1}}\bigg)^{p/2}
d\mu(x)\bigg]^{1/p}.
\end{align*}
Assume that $f$ has compact support, $\supp f \subset [-2^N,2^N]^n$ and $||f||_{L^p(\mu)}=1$. It is sufficient to show that for any $s \geq N$ there holds that
\begin{align*}
&\mathbb{E}_{\beta}\bigg[\int_{\Rn}\bigg(\sum_{\substack{R \in \mathcal{D}^\beta \\ \ell(R)\leq 2^s}} \mathbf{1}_{R}(x)\int_{\ell(R)/2}^{\ell(R)} \int_\Rn
\Big(\frac{t}{t+|x-y|}\Big)^{m\lambda} |\theta_t f(y)|^2 \frac{d\mu(y) dt}{t^{m+1}}\bigg)^{p/2} d\mu(x)\bigg]^{1/p} \\
&\leq C (1+\mathcal{G}_{\text{glo}}(9)) + ||g_{\lambda}^*||_{L^p(\mu) \rightarrow L^p(\mu)}/2.
\end{align*}
Using the similar argument as in subsection $\ref{reduction-good}$, it is enough to prove
\begin{align*}
\bigg[\int_{\Rn}\bigg(\sum_{\substack{R \in \mathcal{D}_{good}^\beta \\ \ell(R)\leq 2^s}} \mathbf{1}_{R}(x)\int_{\ell(R)/2}^{\ell(R)} \int_\Rn
\Big(\frac{t}{t+|x-y|}\Big)^{m\lambda} |\theta_t f(y)|^2 \frac{d\mu(y) dt}{t^{m+1}}\bigg)^{\frac p2} d\mu(x)\bigg]^{\frac1p}
\lesssim (1+\mathcal{G}_{\text{glo}}(9)).
\end{align*}
\subsection{Reduction to paraproduct eatimate.}
We expend the fixed $f \in L^p(\mu)$ as follows
$$f=\lim_{\zeta \rightarrow \infty}\sum_{\substack{Q^* \in \mathcal{D}\\ \ell(Q^*)=2^s \\ Q^* \cap [-2^N,2^N]^n \neq \emptyset}} \sum_{\substack{Q \in
\mathcal{D} \\ Q \subset Q^* \\ \ell(Q)>2^{-\zeta}}}\Delta_Q f.$$
The martingale difference operators $\Delta_Q f = \sum_{Q' \in ch(Q)}\big(\langle f \rangle_{Q'} - \langle f \rangle_{Q}\big)\mathbf{1}_{Q'}$. If
$\ell(Q^*)=2^s$, $\Delta_{Q^*} f = \sum_{Q' \in ch(Q^*)}\langle f \rangle_{Q'}$.
Proceeding as we did in the subsection $\ref{reduction-martingale}$, we are reduced to proving that the quantity
\begin{align*}
\int_{\Rn}\bigg(\sum_{\substack{R \in \mathcal{D}_{good}\\ 2^{-\zeta} < \ell(R) \leq 2^s}} \mathbf{1}_{R}(x)\int_{\ell(R)/2}^{\ell(R)} \int_\Rn
\Big(\frac{t}{t+|x-y|}\Big)^{m\lambda}
\Big| \sum_{\substack{Q \in \mathcal{D}: Q \subset Q^* \\ \ell(Q)>2^{-\zeta}}}\theta_t(\Delta_Q f)(y) \Big|^2 \frac{d\mu(y) dt}{t^{m+1}}\bigg)^{\frac p2} d\mu(x)
\end{align*}
is dominated by $\big(1+\mathcal{G}_{\text{glo}}(9)\big)^p$.

As before, the splitting of the summation leads us to only bound $\mathcal{G}_{less}$, $\mathcal{G}_{sep}$, $\mathcal{G}_{adj}$ and $\mathcal{G}_{nes}$. It is
worth pointing out that the corresponding martingale estimate becomes
$$\Big\|\Big(\sum_{Q \subset Q^*}|\Delta_Q f|^2\Big)^{1/2}\Big\|_{L^p(\mu)} \lesssim ||f||_{L^p(\mu)} =1.$$
Applying the similar technique as in subsections $\ref{sec-less}$, $\ref{sec-sep}$ and $\ref{sec-adj}$, one can get $$\mathcal{G}_{less} + \mathcal{G}_{sep} +
\mathcal{G}_{adj} \lesssim 1.$$
Thus, it only remains to dominate $\mathcal{G}_{nes}$. However, as for the nested term, the only difference lies in the paraproduct estimate. In this setting,
paralleling to previous paraproduct estimate, it holds that
\begin{align*}
\mathcal{G}_{nes,par}^{''}
&= \bigg\|\bigg(\sum_{\substack{S \in \mathcal{D} \\ S \subset Q^*}}\sum_{\substack{R \in \mathcal{D}_{good} \\ R^{(r)}=S}}
|\langle f \rangle_S|^2 \mathbf{1}_{R} \int_{\ell(R)/2}^{\ell(R)} \int_\Rn \Big(\frac{t}{t+|x-y|}\Big)^{m\lambda}
|\theta_t \mathbf{1}(y)|^2 \frac{d\mu(y) dt}{t^{m+1}} \bigg)^{1/2}\bigg\|_{L^p(\mu)} \\
\end{align*}
In order to get $\mathcal{G}_{nes,par}^{''} \lesssim \mathcal{G}_{\text{glo}}(9)$, we need the following Carleson embedding theorem on $L^p(\mu)$. The general
version Carleson embedding theorem was shown in \cite{MM}.
\begin{proposition}
Let $\mathcal{D}$ be a dyadic grid in $\Rn$ and $1 < p \leq 2$ be a fixed number. Suppose that for every $S \in \mathcal{D}$ we have a function $A_S$
satisfying that $\supp A_S \subset S$ and
$$ \mathcal{C}ar := \bigg(\sup_{Q \in \mathcal{D}}\frac{1}{\mu(Q)}\int_{Q}\Big(\sum_{\substack{S \in \mathcal{D} \\ S \subset
Q}}|A_S(x)|^2\Big)^{p/2}d\mu(x)\bigg)^{1/p} < \infty.$$
Then we have that
$$ \Big\|\Big(\sum_{S \in \mathcal{D}}|\langle f \rangle_S|^2 |A_S|^2\Big)^{1/2}\Big\|_{L^p(\mu)}
\lesssim \mathcal{C}ar ||f||_{L^p(\mu)}.$$
\end{proposition}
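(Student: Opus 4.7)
The plan is to combine the standard principal-cube linearisation with a sparseness argument, and to use the inclusion $\ell^{p}\hookrightarrow\ell^{2}$ that is available precisely because $p\le 2$.

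First, I would build a family $\mathcal{P}\subset\mathcal{D}$ of principal cubes adapted to $|f|$: a fixed top cube of $\mathcal{D}$ is declared principal, and given $P\in\mathcal{P}$, its principal children are the maximal strict dyadic descendants $R\subsetneq P$ with $\langle|f|\rangle_{R}>2\langle|f|\rangle_{P}$. For each $S\in\mathcal{D}$, let $\pi(S)\in\mathcal{P}$ be the minimal principal cube containing $S$; the definition immediately gives $|\langle f\rangle_{S}|\le\langle|f|\rangle_{S}\le 2\langle|f|\rangle_{\pi(S)}$. Setting $E(P):=P\setminus\bigcup\{R:R\text{ a principal child of }P\}$, a standard weak-type $(1,1)$ computation inside $P$ produces $\mu(E(P))\ge\mu(P)/2$, the sets $E(P)$ are pairwise disjoint, and $\langle|f|\rangle_{P}\le M^{\mathcal{D}}_{\mu}f(x)$ for every $x\in E(P)\subset P$. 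The $L^{p}(\mu)$ boundedness of the dyadic maximal operator (which holds for any Borel measure once $1<p<\infty$) then delivers the sparseness estimate
\[
\sum_{P\in\mathcal{P}}\langle|f|\rangle_{P}^{p}\,\mu(P)\;\le\;2\sum_{P}\int_{E(P)}\bigl(M^{\mathcal{D}}_{\mu}f\bigr)^{p}\,d\mu\;\lesssim\;\|f\|_{L^{p}(\mu)}^{p}.
\]

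Second, I would group $S\in\mathcal{D}$ according to $\pi(S)$ and set $U_{P}:=\sum_{S\subset P}|A_{S}|^{2}$, obtaining the pointwise bound
\[
\sum_{S\in\mathcal{D}}|\langle f\rangle_{S}|^{2}|A_{S}(x)|^{2}\;\le\;4\sum_{P\in\mathcal{P}}\langle|f|\rangle_{P}^{2}\,\mathbf{1}_{P}(x)\,U_{P}(x).
\]
Since $p/2\le 1$, the elementary subadditivity $\bigl(\sum_{k}a_{k}\bigr)^{p/2}\le\sum_{k}a_{k}^{p/2}$ for nonnegative $a_{k}$ gives
\[
\Big\|\Big(\sum_{S}|\langle f\rangle_{S}|^{2}|A_{S}|^{2}\Big)^{1/2}\Big\|_{L^{p}(\mu)}^{p}\;\le\;2^{p}\sum_{P\in\mathcal{P}}\langle|f|\rangle_{P}^{p}\int_{P}U_{P}^{p/2}\,d\mu.
\]
The hypothesis is exactly $\int_{P}U_{P}^{p/2}\,d\mu\le(\mathcal{C}ar)^{p}\mu(P)$, and combining this with the sparseness bound from the first step produces the required estimate $\lesssim\mathcal{C}ar^{p}\|f\|_{L^{p}(\mu)}^{p}$.

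The main technical point lies in the first step: the sparseness $\mu(E(P))\ge\mu(P)/2$ and the $L^{p}(\mu)$ boundedness of the dyadic maximal operator must be verified despite $\mu$ being only upper power bound (non-doubling), but both are standard for an arbitrary Borel measure once $1<p<\infty$. The only place where the hypothesis $p\le 2$ is essential is the subadditivity $(\sum a_{k})^{p/2}\le\sum a_{k}^{p/2}$ that passes the $\ell^{2}$-sum in $S$ into an $\ell^{p}$-sum after the linearisation; for $p>2$ one would have to replace this step by duality against $L^{(p/2)'}$ together with the full Carleson duality, which is a genuinely different argument.
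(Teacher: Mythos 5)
Your proof is correct. Note that the paper does not actually prove this proposition: it only cites \cite{MM} for ``the general version Carleson embedding theorem'', so there is no internal argument to compare against, and what you wrote is essentially the standard (and, up to cosmetic details, the cited) proof: principal cubes adapted to $|f|$, the subadditivity $(\sum_k a_k)^{p/2}\le\sum_k a_k^{p/2}$ available precisely because $p\le 2$, the Carleson hypothesis applied on each principal cube, and the bound $\sum_{P\in\mathcal{P}}\langle|f|\rangle_P^p\,\mu(P)\lesssim\|f\|_{L^p(\mu)}^p$. Two small points deserve a line in a polished write-up. First, the pairwise disjointness of the sets $E(P)$ is true but not immediate from the definition: it rests on the tree structure of principal cubes, namely that a principal cube strictly contained in another principal cube $P$ must lie inside one of the principal children of $P$ (equivalently, no principal cube sits strictly between a principal child and its principal parent); alternatively you can bypass disjointness entirely by observing that $\{\mu(P)\}_{P\in\mathcal{P}}$ is a Carleson sequence (each generation of principal descendants of a fixed cube carries at most half the measure of the previous one) and invoking the classical dyadic Carleson embedding for averages, which only uses the $L^p(\mu)$ bound for the dyadic maximal function that you already rely on, and which indeed holds for an arbitrary locally finite Borel measure. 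Second, a dyadic grid on $\R^n$ has no top cube, so ``a fixed top cube of $\mathcal{D}$'' should be replaced by running the construction inside an arbitrary fixed $Q\in\mathcal{D}$ with constants uniform in $Q$ and letting $Q$ increase (monotone convergence, plus the fact that $\mathcal{D}$ splits into at most $2^n$ increasing towers), or by noting that in the paper's application all relevant cubes $S$ lie inside a fixed $Q^*$. Neither point affects the validity of your argument.
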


Consequently, it suffices to prove
\begin{equation}\label{Car-Glo}
\mathcal{C}ar \lesssim \mathcal{G}_{\text{glo}}(9),
\end{equation}
where $A_S$ is defined by
$$ A_S(x)^2 = \sum_{\substack{R \in \mathcal{D}_{good} \\ R^{(r)}=S}} \mathbf{1}_{R}(x)\int_{\ell(R)/2}^{\ell(R)} \int_\Rn
\Big(\frac{t}{t+|x-y|}\Big)^{m\lambda} | \theta_t \mathbf{1}(y) |^2 \frac{d\mu(y) dt}{t^{m+1}}. $$
Notice that $R$ is a good cube and $\ell(Q) > 2^r \ell(R)$. Thus we have $R \subset Q$ and
$$ \dist(R,\partial Q) > \ell(R)^\gamma \ell(Q)^{1-\gamma}.$$
We set $\mathcal{W}_Q$ to be the maximal dyadic cubes $R \subset Q$ such that $2^r \ell(R) \leq \ell(Q)$ and
$\dist(R,\partial Q) \geq \ell(R)^\gamma \ell(Q)^{1-\gamma}$.
Hence, we gain the following inequality chain

\begin{align*}
\mathcal{H}
&:=\int_{Q}\Big(\sum_{\substack{S \in \mathcal{D} \\ S \subset Q}}|A_S(x)|^2\Big)^{p/2}d\mu(x) \\
&\leq \int_{Q}\bigg(\sum_{\substack{R \in \mathcal{D}_{good}:R \subset Q \\ 2^r \ell(R) \leq \ell(Q) \\
\dist(R,\partial Q) \geq \ell(R)^\gamma \ell(Q)^{1-\gamma}}}
 \mathbf{1}_{R}(x) \int_{\ell(R)/2}^{\ell(R)} \int_\Rn \Big(\frac{t}{t+|x-y|}\Big)^{m\lambda}
|\theta_t \mathbf{1}(y)|^2 \frac{d\mu(y) dt}{t^{m+1}} \bigg)^{\frac p2} d\mu(x) \\
&\leq \int_{Q}\bigg(\sum_{R \in \mathcal{W}_Q} \sum_{\substack{P \in \mathcal{D} \\ P \subset R}}
 \mathbf{1}_{P}(x) \int_{\ell(P)/2}^{\ell(P)} \int_\Rn \Big(\frac{t}{t+|x-y|}\Big)^{m\lambda}
|\theta_t \mathbf{1}(y)|^2 \frac{d\mu(y) dt}{t^{m+1}} \bigg)^{p/2} d\mu(x) \\
&\leq \int_{Q}\bigg(\sum_{R \in \mathcal{W}_Q}\mathbf{1}_{R}(x) \int_{0}^{\ell(R)} \int_\Rn \Big(\frac{t}{t+|x-y|}\Big)^{m\lambda}
|\theta_t \mathbf{1}(y)|^2 \frac{d\mu(y) dt}{t^{m+1}} \bigg)^{p/2} d\mu(x).
\end{align*}
Finally, it is important to note that $\{ 9R;R \in \mathcal{W}_Q\}$ has bounded overlaps (see \cite{CLX} Proposition 2.3 or \cite{LL} Proposition 3.4) and
$\{R; R \in \mathcal{W}_Q\}$ is a family of disjoint cubes.
\begin{align*}
\mathcal{H}
&\leq \sum_{R \in \mathcal{W}_Q}\int_{R}\bigg( \int_{0}^{\ell(R)} \int_\Rn \Big(\frac{t}{t+|x-y|}\Big)^{m\lambda}
|\theta_t \mathbf{1}(y)|^2 \frac{d\mu(y) dt}{t^{m+1}} \bigg)^{p/2} d\mu(x)\\
&\leq \mathcal{G}_{\text{glo}}(9)^p \sum_{R \in \mathcal{W}_Q} \mu(9R)
\lesssim \mathcal{G}_{\text{glo}}(9)^p \mu(Q).
\end{align*}
This shows $(\ref{Car-Glo})$. Thereby, we complete the proof of $(\ref{key-1})$.


\begin{thebibliography}{00}

\bibitem{AHMTT}P. Auscher, S. Hofmann, C. Muscalu, T. Tao, and C. Thiele, \emph{Carleson measures, trees, extrapolation, and $T(b)$ theorems}, Publ. Mat. 46 (2002), no. 2, 257-325.

\bibitem{CLX}M. Cao, K. Li, Q. Xue, \emph{A characterization of two weight norm inequality for Littlewood-Paley $g_{\lambda}^{*}$-function}, available at http://arxiv.org/abs/1504.07850.

\bibitem{C}M. Christ, \emph{A $T(b)$ theorem with remarks on analytic capacity and the Cauchy integral}, Colloq. Math. 60/61 (1990), 601-628.

\bibitem{Fe}C. Fefferman, {\emph Inequalities for strongly singular convolution operators}, Acta Math., 124 (1970), 9-36.

\bibitem{H2012}T. Hyt\"{o}nen, \emph{The sharp weighted bound for general Calder¡äon-Zygmund operators}, Ann. Math., 175 (3) (2012), 1473-1506.

\bibitem{H2014}T. Hyt\"{o}nen, \emph{The vector-valued nonhomogeneous $Tb$ theorem}, Int. Math. Res. Not. IMRN 2014(2) (2014) 451-511.

\bibitem{HM}T. Hyt\"{o}nen and H. Martikainen, \emph{On general local $Tb$ theorems}, Trans. Amer. Math. Soc. 364 (2012), no. 9, 4819-4846.

\bibitem{HM-1}T. Hyt\"{o}nen, H. Martikainen, \emph{Non-homogeneous Tb theorem and random dyadic cubes on metric measure space}, J. Geom. Anal. 22(4)(2012)1071-1107.

\bibitem{Hofmann}S. Hofmann, \emph{A proof of the local $Tb$ Theorem for standard Calder\'{o}n-Zygmund operators}, available at http://arxiv.org/abs/0705.0840.

\bibitem{LL}M. T. Lacey, K. Li, \emph{Two weight norm inequalities for $g$ function}, Math. Res. Lett., 21 (2014), no. 03, 521-536.

\bibitem{LM-CZ}M. Lacey and H. Martikainen, \emph{Local Tb theorem with $L^2$ testing conditions and general measures: Calder\'{o}n-Zygmund operators}, available at http://arxiv.org/abs/1310.8531.

\bibitem{LM-S}M. T. Lacey and H. Martikainen, \emph{Local Tb theorem with $L^2$ testing conditions and general measures: square functions}, available at http://www.arxiv.org/abs/1308.4571.

\bibitem{MM}H. Martikainen and M. Mourgoglou, \emph{Boundedness of non-homogeneous square functions and $L^q$ type testing conditions with $q \in (1,2)$}, http://arxiv.org/abs/1401.5457.

\bibitem{MMO}H. Martikainen, M. Mourgoglou, T. Orponen, \emph{Square functions with general measures II}, Indiana Univ. Math. J., to appear (2013), available at http://arxiv.org/abs/1305.6865.

\bibitem{MR} B. Muckenhoupt, R. L. Wheeden, \emph{Norm inequalities for the Littlewood-Paley function $g_\lambda^*$},  Trans. Amer. Math. Soc., 191 (1974), 95-111.

\bibitem{NTV2002}F. Nazarov, S. Treil, A. Volberg, \emph{Accretive system $Tb$-theorems on nonhomogeneous spaces}, Duke Math. J. 113 (2002), no. 2, 259-312.

\bibitem{NTV2003}F. Nazarov, S. Treil and A. Volberg, \emph{The Tb-theorem on non-homogeneous spaces}, Acta Math. 190 (2) (2003), 151-239.

\bibitem{Stein1961}E. M. Stein, \emph{On some function of Littlewood-Paley and Zygmund}, Bull. Amer. Math. Soc. 67(1961)99-101.

\bibitem{Stein1970}E. M. Stein, \emph{Topics in harmonic analysis related to the Littlewood-Paley theory}, Anna. Math. Stud., Princeton University Press, Princeton, N.J., 1970.


\end{thebibliography}
\end{document}